\numberwithin{equation}{section}
\newtheorem{theorem}{Theorem}[section]
\newtheorem{lemma}[theorem]{Lemma}
\theoremstyle{definition}
\newtheorem{remark}[theorem]{Remark}
\newcommand{\R}{\mathbb{R}}
\begin{document}

\title
 [Ground states for 3D DIPOLAR BOSE-EINSTEIN CONDENSATES]
 {Ground states for 3D DIPOLAR BOSE-EINSTEIN CONDENSATES INVOLVING QUANTUM FLUCTUATIONS AND THREE-BODY
losses} {\let\thefootnote\relax \footnotetext{This work was supported by National Natural Science Foundation of China (Grant No. 11901147 and 11771166) and the Fundamental Research Funds for the Central Universities of China (Grant No. JZ2020HGTB0030 and JZ2019HGBZ0156).}}

\maketitle
\begin{center}
\author{Xiao Luo}
\footnote{Email addresses: luoxiao@hfut.edu.cn (X. Luo).}
\author{Tao Yang}
\footnote{Email addresses: yangtao\_pde@163.com (T. Yang).}
\end{center}

\begin{center}
\address {1 School of Mathematics, Hefei University of Technology, Hefei, 230009, P. R. China}

\address {2 School of Mathematics and Statistics, Central China Normal University, Wuhan, 430079, P. R. China}
\end{center}
\maketitle

\begin{abstract}
We consider ground states of three-dimensional dipolar Bose-Einstein condensate involving quantum fluctuations and three-body losses, which can be described equivalently by positive $L^2$-constraint critical point of the Gross-Pitaevskii energy functional
\[E(u)\!=\!\frac{1}{2}\int_{{\mathbb{R}^3}} {|\nabla u|}^2dx+\frac{\lambda_{1}}{2}\int_{{\mathbb{R}^3}} {| u|}^4dx+\frac{\lambda_{2}}{2} \int_{\mathbb{R}^{3}}\left(K \star|u|^{2}\right)|u|^{2} d x+\frac{2\lambda_{3}}{p}\int_{{\mathbb{R}^3}} {|u|}^{p}dx,\]
where $2\!<\!p\!<\!\frac{10}{3}$, $\lambda_3\!\in\!\R^{-}$, $\star$ is the convolution, $ K(x) \!=\! \frac{{1-3{{\cos }^2}\theta(x) }}{{{{| x |}^3}}}$, $\theta(x)$ is the angle between the dipole axis determined by $(0,0,1)$ and the vector $x$.
If ${\lambda _1} \!\!<\!\! \frac{4\pi}
{3} {\lambda _2}\!\leq\! 0$ or ${\lambda _1} \!\!<\!- \frac{8\pi}{3} {\lambda _2}\!\leq\! 0$, $E(u)$
is unbounded on the $L^2$-sphere $S_{c}\!:=\!\Big\{ u \!\in\! H^1({\mathbb{R}^3}): \int_{{\mathbb{R}^3}} {{|u|}^2}dx\!=\!c^2  \Big\}$, so we turn to study a local minimization problem
$$ m(c,R_0)\!:=\!\inf _{u \in V^c_{R_0}} E(u)$$
for a suitable $R_0\!>\!0$ with $V^c_{R_0} \!:=\!\left\{u \!\in\! S_c : \big(\int_{{\mathbb{R}^3}} {{|\nabla u|}^2dx}\big)^{\frac{1}{2}} \!<\!R_0\right\}$. 

We show that $m(c,R_0)$ is achieved by some $u_c>0$, which is a stable ground state. Furthermore, by refining the upper bound of $m(c, R_0)$, we provide a precise description of the asymptotic behavior of $u_c$ as the mass $c$ vanishes, i.e.
 $$\Big[  \frac{p|\lambda_{3}|}{2\gamma_{c}}
\Big]^{\frac{1}{p-2}}u_{{c}}(\frac{x+y_c}{\sqrt{2\delta_p\gamma_{c}}})\!
\rightarrow\! W_p~~~~\mbox{in}~~~~H^1(\R^3)~~~~\mbox{for some}~~~~y_{c} \!\in\! \mathbb{R}^{3}~~~~\mbox{as}~~~~c\to 0^+,$$
where $W_p$ is the unique positive radial solution of $-\Delta W+(\frac{1}{\delta_p}-1)W \!=\!\frac{2}{p\delta_p}|W|^{p-2}W$ with $\delta_p\!=\!\frac{3(p-2)}{2p}$, $\mathcal{C}_{p}\!=\!\Big( \frac{p}{2 ||W_p||^{p-2}_{2}} \Big)^{\frac{1}{p}}$ and $\gamma_{c}\!=\!\Big[2\delta_p \Big]^{\frac{p\delta_p}{2-p\delta_p}}
\Big[\mathcal{C}_{p}^{p}|\lambda_{3}|\Big]^{\frac{2}{2-p\delta_p}}
c^{\frac{2(p-2)}{2-p\delta_p}}$.

{\bf Key words }: Dipolar Bose-Einstein condensate; Ground states; Stability; Asymptotic behavior.

{\bf 2010 Mathematics Subject Classification }: Primary 35J20, 35J60, 35B38, 35B40.
\end{abstract}

\maketitle

\section{Introduction and main result}

\setcounter{equation}{0}

This paper concerns the existence of solutions $({\mu},u)\!\in\!  {\mathbb{R}}\!\times \!H^1({\mathbb{R}^3})$ to the Gross-Pitaevskii equation involving quantum fluctuations and three-body interactions
\begin{equation}\label{eq1.1}
- \frac{1}{2} \Delta u +{\lambda _1}{| u |^2}u + {\lambda _2}(K \star {| u |^2})u+{\lambda _3}{| u |^{p-2}}u+\mu u=0~~~~{\text{ in }}{\mathbb{R}^3}
\end{equation}
under the constraint
\begin{equation}\label{eq1.2}
  \int_{{\mathbb{R}^3}} {{u}^2}=c^2,
\end{equation}
where $c\!>\!0$, $2\!<\!p\!<\!\frac{10}{3}$, $(\lambda_1, \lambda_2, \lambda_3)\!\in\! \R^2\!\times\!\R^{-}$,
$\star$ denotes the convolution, $ K(x) \!=\! \frac{{1-3{{\cos }^2}\theta(x) }}{{{{| x |}^3}}}$ and $\theta(x)$ is the angle between the dipole axis determined by $(0,0,1)$ and the vector $x$. We call $u$ a normalized solution to (\ref{eq1.1}), since (\ref{eq1.2}) imposes a normalization on its $L^2$-mass. Normalized solutions to (\ref{eq1.1}) can be obtained by searching critical points of
\begin{equation} \label{energyF}
E(u):=\frac{1}{2}\|\nabla u\|_{2}^{2}+\frac{\lambda_{1}}{2}\|u\|_{4}^{4}+\frac{\lambda_{2}}{2} \int_{\mathbb{R}^{3}}\left(K \star|u|^{2}\right)|u|^{2} d x+\frac{2\lambda_{3}}{p}\|u\|_{p}^{p}
\end{equation}
on the constraint
\begin{equation} \label{eq1.06}
S_{c}: =\Big\{ u \in H^1({\mathbb{R}^3}): {||u||}_2^2=\int_{{\mathbb{R}^3}} {u}^2=c^2  \Big\}
\end{equation}
with $\mu$ appearing as Lagrange multipliers. This fact implies that $\mu$ cannot be determined a priori, but is part of the unknown.

Problem (\ref{eq1.1})-(\ref{eq1.2}) arises from seeking standing waves, i.e. $\psi(t,x)=e^{i \mu t} u(x)$, for the time-dependent Gross-Pitaevskii equation
\begin{align}\label{gpt1.4}
i \partial_{t} \psi=-\frac{1}{2} \Delta \psi +\lambda_{1}|\psi|^{2} \psi+\lambda_{2}\left(K \star|\psi|^{2}\right) \psi+\lambda_{3}|\psi|^{p-2} \psi, \quad (t,x)\in \mathbb{R}^{+}\times\mathbb{R}^{3},
\end{align}
which models the dipolar Bose-Einstein condensates. The parameters $\lambda_i  (i=1,2,3)$ describe the strength of the three nonlinearities in \eqref{gpt1.4}. For $p\!=\!5$ and $\lambda_{3}\!>\!0$, \eqref{gpt1.4} corresponds to the Lee-Huang-Yang correction (see \cite{mBlo}). When $p\!=\!6$, \eqref{gpt1.4} with $\lambda_{3}\!>\!0$ describes the short-range conservative three-body interactions (see \cite{lAkI}) and \eqref{gpt1.4} with $\lambda_{3}\!<\!0$ models three-body losses (see \cite{MeLf}), respectively. For detailed physical backgrounds on \eqref{gpt1.4} and further references, one can refer to \cite{bcw,cms,YhXl,Ymas,lYmAs,sszl,yy1,yy2} and the references therein.


R. Carles et al. \cite{cms} concerned with the existence and uniqueness of solution to
\begin{align}\label{1.7}
i{\partial _t}\psi  = -\frac{1}
{2}\Delta \psi+\frac{a^2}
{2}{|x{|^2}}\psi  + {\lambda _1}|\psi {|^2}\psi  + {\lambda _2}(K \star |\psi {|^2})\psi,{\text{  }}\psi (0,x) = {\psi _0}(x)\in {H^1}({\mathbb{R}^3})
\end{align}
In the so-called stable regime ${\lambda _1} \!\ge\! \frac{4\pi}
{3} {\lambda _2} \!\ge\! 0$, the authors showed that \eqref{1.7} has a unique global solution. In the unstable regime ${{\lambda _1} \!<\! \frac{4\pi}{3}{\lambda _2}}$, they observed the possibility of finite time blow-up. Later on, the stationary equation of \eqref{1.7}
\begin{align}\label{1.5}
- \frac{1}{2}\Delta u + \frac{{{a^2}}}{2}{| x |^{2}}u + {\lambda _1}{| u |^2}u + {\lambda _2}(K \star {| u |^2})u + \mu u = 0, \text{ } x \in {\R^3},
\end{align}
with $a \ge 0$ and $\mu \in \R$ has been studied by many authors, see e.g. \cite{as,bcw,bj1,ch}.
For $a\!=\!0$ and $\mu\!>\!0$, P. Antonelli and C. Sparber \cite{as} considered \eqref{1.5} by studying the minimization problem
\[
\inf_{0\not \equiv u\in H^1(\R^3)}J(u):=\inf_{0\not \equiv u\in H^1(\R^3)} \frac{\|\nabla u\|_{2}^{3} \| u\|_{2}} { -\lambda_{1}\|u\|_{4}^{4}-\lambda_{2}\int_{\mathbb{R}^{3}}\left(K \star|u|^{2}\right)|u|^{2} d x },
\]
and they showed that \eqref{1.5} has a positive solution provided that either  ${\lambda_1}\!<\!\frac{4\pi}{3}{\lambda_2}$ and ${\lambda_2}\!>0$ or ${\lambda _1}\!<\!-\frac{8\pi}{3}{\lambda_2}$ and ${\lambda_2}\!<0$.
Further properties (symmetry, regularity, decay) of the solutions to \eqref{1.5} have been obtained.

Alternatively, R. Carles and H. Hajaiej \cite{ch} studied \eqref{1.5} with a prescribed $L^2$-norm and $a=1$. They mainly focused on a global minimization problem on a $L^2$-sphere. If ${\lambda _1} \!\geq\! \frac{4}{3}\pi {\lambda _2} \!>\! 0$ or ${\lambda _1}\!\geq\!-\frac{8}{3}\pi {\lambda _2}\!>\!0$, they proved that \eqref{1.5} has a non-negative minimal solution with Lagrange multiplier $\mu$. Moreover, the obtained solution is unique and Steiner symmetric.

In \cite{bj1}, J. Bellazzini and L. Jeanjean also studied \eqref{1.5} with a prescribed $L^2$-norm and $a\geq0$. Since they assumed that ${\lambda_1}\!<\!\frac{4\pi}{3}{\lambda_2}$ and ${\lambda_2}\!>0$ or ${\lambda _1}\!<\!-\frac{8\pi}{3}{\lambda_2}$ and ${\lambda_2}\!<0$, the corresponding energy functional is unbounded on a $L^2$-sphere. As a result, the Palais-Smale sequence of the corresponding energy functional may not be bounded. To this end, J. Bellazzini et al. constructed a special Palais-Smale sequence which is very close to the Pohozaev manifold. If $a=0$, they proved that \eqref{1.5} has a mountain pass type solution by using the monotonicity of the mountain pass level. The authors then proved that there exists some $a_0>0$ such that \eqref{1.5} has a topological local minimal solution and a mountain pass type solution if $a\in (0,a_0]$. They also gave some stable scattering and asymptotic results.



The very recent works of Y. M. Luo and A. Stylianou \cite{Ymas,lYmAs} were devoted to the study of \eqref{gpt1.4} in two cases:  ${\lambda_3}\!<\!0$ and $p\!=\!5$; ${\lambda_3}\!>\!0$ and $p \in (4, 6]$, respectively. In \cite{Ymas}, the authors adopted a mountain pass argument on a $L^2$-sphere  and constructed a special Palais-Smale sequences in searching a positive ground state as saddle point. In \cite{lYmAs}, they proved several existence and nonexistence of standing waves to \eqref{gpt1.4} in different parameter regimes. In addition, they also proved the global well-posedness and small data scattering of solutions to \eqref{gpt1.4} if $p\!\in\!(4,6)$.

In \cite{TrIa}, A. Triay studied the sharp existence of minimizers in generalized Gross-Pitaevskii theory with the Lee-Huang-Yang correction, that is $\lambda_{3}\!>\!0$ and $p\!=\!5$ in \eqref{gpt1.4}. The author proved that
there is no minimizer for $E|_{S_c}$ if $c\in(0,c_0)$ and $E|_{S_c}$ possesses a global minimizer $u_c$ provided $c\geq c_0>0$ for some $c_0=c_0(\lambda_1,\lambda_2,\lambda_3)$. In addition, $u_c$ is $C^{\infty}$ and decays exponentially. The exact lower and upper bounds of $c_0$ are also obtained.




In this paper, we consider the existence and asymptotic properties of normalized solutions to (\ref{eq1.1}) with $(\lambda_1, \lambda_2, \lambda_3)\!\in\! \R^2\!\times\!\R^{-}$ and $2\!<\!p\!<\!\frac{10}{3}$. To the best of our knowledge, problem (\ref{eq1.1})-(\ref{eq1.2}) in this regime has not been studied before.


Following Definition 1.1 in \cite{bj1}, we say that $u$ is a \textbf{ground state} of (\ref{eq1.1}) on $S_c$ if it is a solution to (\ref{eq1.1}) having minimal energy among all solutions which belong to $S_c$, that is
$$
\left.E'\right|_{S_{c}}(u)=0 \quad \text { and } \quad E(u)=\inf \left\{E(w): \left.E'\right|_{S_{c}}(w)=0 \text { and }  w\in S_{c}\right\}.
$$
The set of the ground states will be denoted by $Z_{c}$. $Z_{c}$ is \textbf{stable} if for every $\varepsilon>0$ there exists $\delta>0$ such that, for any $\psi_{0} \in {H^1}({\mathbb{R}^3})$ with $\inf _{v \in Z_{c}} \left\|\psi_{0}-v\right\|_{{H^1}({\mathbb{R}^3})}<\delta$, we have
$$
\sup _{t \in [0,T_{\psi}^{max})} \inf _{v \in Z_{c}}\|\psi(t, \cdot)-v\|_{{H^1}({\mathbb{R}^3})}<\varepsilon, 
$$
where $\psi(t, \cdot)$ denotes the solution to (\ref{gpt1.4}) with initial datum $\psi_{0}$ and $T_{\psi}^{max}$ denotes its maximal time for existence.

To state the main results, we denote
\begin{equation}   \label{Lambda}
B(u):=\int_{{\R^3}} {{\lambda_1}{{| u |}^4}}  +{\lambda_2} {(K * {{| u |}^2}){{| u |}^2}},~~~~~~~~\Lambda\!:=\!\frac{\max \big\{|\lambda_{1}-\frac{4 \pi}{3}\lambda_{2}|,|\lambda_{1}+\frac{8 \pi}{3}\lambda_{2}|\big\}}{(2 \pi)^{3}}
\end{equation}
and
\begin{equation}   \label{D_0}
D_0\!: =\! \Bigl\{ {({\lambda _1},{\lambda _2}) \!\in\! {\mathbb{R}^2}: {\lambda _1} \!<\! \frac{4\pi}
{3} {\lambda _2} \!\leq\! 0,~~~~~~~~{\text{ or }}~~~~~~~~ {\lambda _1} \!<\!- \frac{8\pi}
{3} {\lambda _2}\!\leq\! 0} \Bigr\}.
\end{equation}
For $(\lambda_1,\lambda_2,\lambda_3)\!\in\! D_0\!\times\!\R^{-}$ and  $2\!<\!p\!<\!\frac{10}{3}$, we introduce four positive constants:
\begin{align} \label{cost2.11}
&\delta_p\!:=\!\frac{3(p-2)}{2p},\ \ \ \ \ \ \ \ \ \ \ \ \ \ \ \ \ \ \ \ \ \ c_*\!:=\!\Big\{\frac{p}{4(3\!-\!p\delta_p)|\lambda_{3}|\mathcal{C}_{p}^p } \Big[\frac{(2\!-\!p\delta_p)}{(3\!-\!p\delta_p)\Lambda \mathcal{C}_{4}^4}\Big]^{2\!-\!p\delta_p} \Big\}^{\frac{1}{2(4-p)}} , \nonumber \\
&{t}_{c_*}:=\frac{2-p\delta_p}{(3-p\delta_p)\Lambda \mathcal{C}_{4}^4 c_*},\ \ \ \ \ \ \ \ \ \ \kappa_{p,\lambda_{3}}\!:=\!\frac{10-3p}{6(p-2)}
\Big[ 2\delta_p \mathcal{C}_{p}^p|\lambda_{3}| \Big]
^{\frac{2}{2-p\delta_p}},
\end{align}
where $\mathcal{C}_{p}\!>\!0$ is the best constant in the Gagliardo-Nirenberg inequality \eqref{equ2.2}. 


Our main results are as follows.
\begin{theorem}\label{th1.1}
Let $(\lambda_1,\lambda_2,\lambda_3)\!\in\! D_0\!\times\!\R^{-},$   $2\!<\!p\!<\!\frac{10}{3}$ and $0\!<\!c\!\leq\!c_*$. Then, $E|_{S_{c}}$ has a critical point $u_{c}$ at level $m(c, R_0)\!<\!-\kappa_{p,\lambda_{3}} c^{\frac{6-p}{2-p\delta_p}}$, which is an local minimizer of $E$ on 
$$
V^c_{R_0}:=\left\{u \in S_{c}: {||\nabla u||}_2<R_0\right\}
$$
for some $R_0=R_0(c)\in(0,\frac{c}{c_*} {t}_{c_*}]$. Moreover, we have  \\
$\textbf{(1)}$ ${u}_c$ is a ground state of (\ref{eq1.1}) on $S_{c}$, and any ground state is a local minimizer of $E|_{V^c_{R_0}}$; \\
$\textbf{(2)}$ ${u}_{c}>0$ and ${u}_{c}$ solves (\ref{eq1.1}) for some $\mu_c\in \Big(\kappa_{p,\lambda_{3}}c^{\frac{2(p-2)}{2-p\delta_p}}, \frac{1-\delta_p}{2\delta_p}
\Big[\frac{4(3-p\delta_p)|\lambda_{3}|\mathcal{C}_{p}^p }{p}\Big]^{\frac{2}{2-p\delta_p}} c^{\frac{2(p-2)}{2-p\delta_p}}\Big)$;\\
$\textbf{(3)}$ $m(c,R_0)\!\to\! 0^-$, $||\nabla u_{c}||_{2} \rightarrow 0^+$ as $\lambda_3 \rightarrow 0^{-}$.\\
$\textbf{(4)}$ the set of ground states is stable under the flow associated with problem \eqref{gpt1.4}.
\end{theorem}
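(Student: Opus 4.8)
The plan is to realise $V^c_{R_0}$ as a genuine potential well of $E|_{S_c}$, and to extract all four assertions from the compactness of minimising sequences together with the Nehari and Pohozaev identities. First I would establish the geometry. Writing $B(u)$ through the Fourier multiplier of the dipolar kernel, whose range is $[-\tfrac{4\pi}{3},\tfrac{8\pi}{3}]$, gives $|B(u)|\le\Lambda\|u\|_4^4$, and on $D_0$ one even gets $B(u)<0$ for $u\not\equiv0$; combined with the Gagliardo--Nirenberg inequality \eqref{equ2.2} for the exponents $4$ (with $\delta_4=\tfrac34$) and $p$, this yields, for every $u\in S_c$,
\[
E(u)\ \ge\ g_c(\|\nabla u\|_2),\qquad g_c(t):=\tfrac12 t^{2}-\tfrac12\Lambda\mathcal{C}_4^4\,c\,t^{3}-\tfrac{2|\lambda_3|}{p}\mathcal{C}_p^p\,c^{\,p(1-\delta_p)}\,t^{\,p\delta_p}.
\]
Since $p\delta_p<2$, $g_c$ is negative near $t=0$, and a one–variable study of $g_c$ under the threshold $0<c\le c_*$ produces $R_0=R_0(c)\in(0,\tfrac{c}{c_*}t_{c_*}]$ with $m(c,R_0)<0$ while $\inf\{E(u):u\in S_c,\ \|\nabla u\|_2=R_0\}>m(c,R_0)$, and so that minimising sequences stay, for $n$ large, in a compact sub-ball $\{\|\nabla u\|_2\le R_1\}$ with $R_1<R_0$. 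For the refined upper bound I would evaluate $E$ along the fibre $u_t(x)=t^{3/2}u(tx)$ with $u$ a rescaled Gagliardo--Nirenberg optimiser $W_p$ of mass $c$: from $E(u_t)=\tfrac{t^2}{2}\|\nabla u\|_2^2+\tfrac{t^3}{2}B(u)+\tfrac{2\lambda_3}{p}t^{\,p\delta_p}\|u\|_p^p$ with $B(u)<0$ and the cubic term of lower order as $t\to0^+$, minimising the dominant part over small $t$ gives $m(c,R_0)<-\kappa_{p,\lambda_3}\,c^{\frac{6-p}{2-p\delta_p}}$ whenever $c\le c_*$.

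Next, the compactness (which I expect to be the crux). Let $\{u_n\}\subset V^c_{R_0}$ be minimising; it is bounded in $H^1(\R^3)$. Vanishing is impossible: it would force $\|u_n\|_4,\|u_n\|_p\to0$, hence $E(u_n)=\tfrac12\|\nabla u_n\|_2^2+o(1)\ge o(1)>m(c,R_0)$. So, after translations, $u_n\rightharpoonup u_c\ne0$ in $H^1(\R^3)$. To exclude dichotomy I will use the Brezis--Lieb splitting --- valid for $\|\cdot\|_4^4$, $\|\cdot\|_p^p$, and, by $|B(u)|\le\Lambda\|u\|_4^4$ and bilinearity, for $B$ --- giving $E(u_n)=E(u_c)+E(u_n-u_c)+o(1)$ and $c^2=\|u_c\|_2^2+\|u_n-u_c\|_2^2+o(1)$. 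If $0<\|u_c\|_2=:a<c$, this must contradict the strict binding inequality $m(c,R_0(c))<m(a,R_0(a))+m(b,R_0(b))$, $b^2=c^2-a^2$; the latter I would derive from the scaling monotonicity of $c\mapsto m(c,R_0(c))/c^2$: the map $u\mapsto u(\lambda\,\cdot)$ with $\lambda=(a/c)^{2/3}<1$ carries $V^a_{R_0(a)}$ into $V^c_{R_0(c)}$ --- here the linearity of $R_0$ in $c$ is essential --- and $E(u(\lambda\,\cdot))=\lambda^{-3}E(u)+\tfrac12(\lambda^{-1}-\lambda^{-3})\|\nabla u\|_2^2<(c/a)^2E(u)$ since $E(u)<0$, so taking infima yields $m(c,R_0(c))/c^2<m(a,R_0(a))/a^2$ for $a<c$, whence strict subadditivity by an elementary manipulation with $c^2=a^2+b^2$. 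Hence $\|u_c\|_2=c$, $u_n-u_c\to0$ in $L^4\cap L^p$ by interpolation, all nonlinear terms converge, $\|\nabla u_n\|_2\to\|\nabla u_c\|_2$, and $u_n\to u_c$ in $H^1(\R^3)$. Thus $m(c,R_0)$ is attained; moreover $\|\nabla u_c\|_2<R_0$ (otherwise $E(u_c)>m(c,R_0)$ by the potential–well property), so $u_c$ is an interior local minimiser, $E'|_{S_c}(u_c)=0$, with a Lagrange multiplier $\mu_c$.

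It then remains to harvest the four conclusions. Since $E(|u|)\le E(u)$ (the dipolar term is invariant under $u\mapsto|u|$, the others do not increase), we may take $u_c\ge0$; convolution with $K$ being a Calderon--Zygmund operator bounded on $L^q$ for $1<q<\infty$, a standard bootstrap gives $u_c\in W^{2,q}_{\mathrm{loc}}\hookrightarrow C^{1,\alpha}$, and the strong maximum principle gives $u_c>0$. Every critical point $w\in S_c$ satisfies the Nehari and Pohozaev identities; eliminating $B(w)$ between them yields $E(w)=\tfrac16\|\nabla w\|_2^2-\tfrac{|\lambda_3|(4-p)}{p}\|w\|_p^p$ together with a constraint on $\|\nabla w\|_2$ which, with $E(w)\ge g_c(\|\nabla w\|_2)$, forces $\|\nabla w\|_2\in(0,R_0)\cup[\bar R,\infty)$ for some $\bar R>R_0$: on the first interval $E(w)\ge m(c,R_0)=E(u_c)$, and on the second $E(w)>0>E(u_c)$ because $\|w\|_p^p$ is of lower order than $\|\nabla w\|_2^2$ by Gagliardo--Nirenberg. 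Hence $u_c$ is a ground state, and any ground state $w$ (having $E(w)=m(c,R_0)<0$) satisfies $\|\nabla w\|_2<R_0$ and minimises $E$ on $V^c_{R_0}$ --- this is (1). Solving the Nehari--Pohozaev system for $\mu_c$ gives $\mu_c c^2=\tfrac16\|\nabla u_c\|_2^2+\tfrac{|\lambda_3|(4-p)}{p}\|u_c\|_p^p>0$, and inserting $\|\nabla u_c\|_2<R_0$, the Gagliardo--Nirenberg inequality and $m(c,R_0)<-\kappa_{p,\lambda_3}c^{\frac{6-p}{2-p\delta_p}}$ pins $\mu_c$ to the stated interval, completing (2). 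For (3), as $\lambda_3\to0^-$ one has $c_*\to\infty$ and $R_0(c)\le\tfrac{c}{c_*}t_{c_*}=\tfrac{c(2-p\delta_p)}{(3-p\delta_p)\Lambda\mathcal{C}_4^4\,c_*^2}\to0$, so $0<\|\nabla u_c\|_2<R_0(c)\to0^+$, and then $m(c,R_0)=E(u_c)\to0$ by the Gagliardo--Nirenberg bounds on $B(u_c)$ and $\|u_c\|_p^p$; since $m(c,R_0)<0$, $m(c,R_0)\to0^-$. For (4), mass and $E$ are conserved along \eqref{gpt1.4}, so a solution issuing from near $Z_c$ stays in $V^c_{R_0}$ for all time (in particular $T_{\psi}^{max}=\infty$ by the a priori $H^1$ bound); were $Z_c$ unstable, the states $\psi(t_n,\cdot)$ would form a minimising sequence for $m(c,R_0)$ bounded away from $Z_c$, contradicting the relative compactness up to translation obtained above.

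The hard part will be the compactness step: ruling out dichotomy with a $c$–dependent radius $R_0(c)$ is what forces the scaling monotonicity of $m(c,R_0(c))/c^2$ and the Brezis--Lieb decomposition of the nonlocal dipolar term, and it demands care to keep both pieces of a putative splitting inside the correct local minimisation problems.
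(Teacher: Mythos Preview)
Your overall architecture is right, and the geometry, the refined upper bound via $W_p$, the ground-state discussion through the Pohozaev identity, the $\mu_c$ bounds, (3), and the stability argument all match the paper's approach. The genuine gap is exactly where you flag the ``hard part'': the subadditivity step.

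You assert that the dilation $u\mapsto u(\lambda\,\cdot)$ with $\lambda=(a/c)^{2/3}$ carries $V^a_{R_0(a)}$ into $V^c_{R_0(c)}$, and you say ``the linearity of $R_0$ in $c$ is essential''. But $R_0(c)$ is \emph{not} linear in $c$: it is the first positive root of $h_c(t)=\tfrac12 t^{2}-\tfrac12\Lambda\mathcal C_4^4\,c\,t^{3}-\tfrac{2|\lambda_3|}{p}\mathcal C_p^p\,c^{\,p(1-\delta_p)}t^{p\delta_p}$, and a moment's inspection of this equation (the two nonlinear terms carry different powers of $c$) shows no homogeneity in $c$. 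What you actually need is $(c/a)^{1/3}R_0(a)\le R_0(c)$, and the available bounds (lower bound $\bar t_c\sim c^{(6-p)/(10-3p)}$, upper bound $\tfrac{c}{c_*}t_{c_*}$) do not deliver this. There is a second, related problem: even granting subadditivity, after the Brezis--Lieb splitting you only know $\|\nabla u_c\|_2\le\liminf\|\nabla u_n\|_2<R_0(c)$ and likewise for the remainder; to invoke $m(a,R_0(a))$ and $m(b,R_0(b))$ you would need $\|\nabla u_c\|_2<R_0(a)$ and $\|\nabla v_n\|_2<R_0(b)$, which is not clear since $R_0(a),R_0(b)<R_0(c)$.

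The paper resolves both difficulties by a single device you are missing: it replaces $m(c,R_0(c))$ by $m(c,t_{c_*})$ with the \emph{fixed} radius $t_{c_*}$ (independent of $c$), after checking that $m(c,k)$ is constant for $k\in[R_0(c),R_1(c)]$. The crucial observation is that any $u\in V^{c_1}_{t_{c_*}}$ with $E(u)<0$ automatically satisfies $\|\nabla u\|_2<\tfrac{c_1}{c_*}t_{c_*}$ (because $h_{c_1}>0$ on $[\tfrac{c_1}{c_*}t_{c_*},t_{c_*}]$ when $c_1<c_*$). Then the \emph{multiplicative} rescaling $v=\theta u$ with $\theta\in(1,c/c_1]$ gives $\|\nabla v\|_2=\theta\|\nabla u\|_2<\theta\tfrac{c_1}{c_*}t_{c_*}\le t_{c_*}$, so $v\in V^{\theta c_1}_{t_{c_*}}$, and $E(\theta u)<\theta^2 E(u)$ since $B(u)<0$ and $\theta>1$. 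This yields $m(\theta c_1,t_{c_*})\le\theta^2 m(c_1,t_{c_*})$ (strict when the infimum is attained), hence the needed strict subadditivity; and since the radius is fixed, both dichotomy pieces trivially stay in $V^{\|\cdot\|_2}_{t_{c_*}}$ by weak lower semicontinuity. If you incorporate this fixed-radius trick and switch to the multiplicative scaling, your outline goes through.
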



Furthermore, we give a precise description about the minimizers of $m(c,R_0)$ as $c\!\to\! 0^+$.
\begin{theorem}\label{th1.3}
Let $(\lambda_1,\lambda_2,\lambda_3)\!\in\! D_0\!\times\!\R^{-}$,   $2\!<\!p\!<\!\frac{10}{3}$, $c_k \!\to\! 0^+$ as $k\!\to\!+\infty$ and $u_{c_k}\!\in\! V^{c_k}_{R_0}\!=\!\left\{u \!\in\! S_{c_k}: {||\nabla u||}_2\!<\!R_0(c_k) \right\}$ be a positive minimizer of $m(c_k,R_0)$ for each $k\!\in\! \mathbb{N}$, then  \\
\begin{align*}
\frac{m(c_k,R_0)}{c_k^{\frac{6-p}{2-p\delta_p}}}  \to  -\kappa_{p,\lambda_{3}},~~~~~~~~~~~~~~~~~~~~~~~
\frac{\mu_{c_k}}{c_k^{\frac{2(p-2)}{2-p\delta_p}}}\to \frac{p(1-\delta_p)}{2-p\delta_p}\kappa_{p,\lambda_{3}},
~~~~~~~~~~~~~~~~~~~~~~\frac{ |B(u_{c_k})| }{c_k^{\frac{6-p}{2-p\delta_p}}} \to0, \\
\frac{\left\|\nabla u_{c_k} \right\|_{2}^2}{c_k^{\frac{6-p}{2-p\delta_p}}} \to
\frac{2p\delta_p}{2-p\delta_p} \kappa_{p,\lambda_{3}},~~~~~~~~~~~~~
\frac{\left\| u_{c_k} \right\|_{p}^p}{c_k^{\frac{6-p}{2-p\delta_p}}}\to \frac{p}{(2-p\delta_p)|\lambda_{3}|}\kappa_{p,\lambda_{3}},
~~~~~~~~\mbox{as}~~~~~~k\!\to\!+\infty.
\end{align*}
Moreover, there exists a sequence of $\{y_{k}\} \!\subset\! \mathbb{R}^{3}$ such that $$v_k(x)\!:=\!\Big[  \frac{p|\lambda_{3}|}{2\gamma_{c_k}}
\Big]^{\frac{1}{p-2}}u_{{c_k}}(\frac{x+y_k}{\sqrt{2\delta_p\gamma_{c_k}}})\!
\rightarrow\! W_p~~~~\mbox{in}~~~~H^1(\R^3) \mbox{as}~~~~~~k\!\to\!+\infty,$$
where $W_p$ is the unique positive radial solution of $-\Delta W+(\frac{1}{\delta_p}-1)W \!=\!\frac{2}{p\delta_p}|W|^{p-2}W$ and $\gamma_{c_k}\!:=\!\Big[2\delta_p \Big]^{\frac{p\delta_p}{2-p\delta_p}}
\Big[\mathcal{C}_{p}^{p}|\lambda_{3}|\Big]^{\frac{2}{2-p\delta_p}}
c_k^{\frac{2(p-2)}{2-p\delta_p}}$.
 \end{theorem}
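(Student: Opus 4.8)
The plan is to extract the asymptotic profile from the sharp two-sided bound on $m(c,R_0)$ together with the scaling structure of the problem. First I would establish the lower bound $m(c_k,R_0)\!\geq\!-(1+o(1))\kappa_{p,\lambda_3}c_k^{(6-p)/(2-p\delta_p)}$ by combining the Gagliardo-Nirenberg inequality \eqref{equ2.2} (both for $\|u\|_p^p$ and for $|B(u)|$, controlled by $\Lambda\mathcal C_4^4\|\nabla u\|_2^3c$) with the fact that on $V^{c_k}_{R_0}$ one has $\|\nabla u_{c_k}\|_2\leq R_0(c_k)\leq\frac{c_k}{c_*}t_{c_*}\to 0$. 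The key point is that, after substituting $s=\|\nabla u_{c_k}\|_2^2$, the energy is bounded below by a one-variable function whose minimum over $s>0$ equals exactly $-\kappa_{p,\lambda_3}c_k^{(6-p)/(2-p\delta_p)}$ up to a term that is $o(c_k^{(6-p)/(2-p\delta_p)})$ because the quartic/dipolar contribution $|B(u)|\lesssim s^{3/2}c_k$ is of strictly higher order in the relevant scaling regime $s\sim c_k^{(6-p)/(2-p\delta_p)}$ (here one uses $2<p<\tfrac{10}{3}$, equivalently $p\delta_p<2$). Matching this with the upper bound $m(c_k,R_0)<-\kappa_{p,\lambda_3}c_k^{(6-p)/(2-p\delta_p)}$ from Theorem \ref{th1.1} forces $m(c_k,R_0)/c_k^{(6-p)/(2-p\delta_p)}\to-\kappa_{p,\lambda_3}$.

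Next I would feed this sharp value back into the energy identity. Writing $E(u_{c_k})=\tfrac12\|\nabla u_{c_k}\|_2^2+\tfrac12 B(u_{c_k})+\tfrac{2\lambda_3}{p}\|u_{c_k}\|_p^p$ and using the Pohozaev/Nehari-type identities satisfied by the constrained critical point (obtained by differentiating along the dilation $u\mapsto t^{3/2}u(tx)$, which gives a relation between $\|\nabla u_{c_k}\|_2^2$, $B(u_{c_k})$ and $\|u_{c_k}\|_p^p$) together with the equation itself (which pins down $\mu_{c_k}c_k^2$ in terms of the same three quantities), I get a closed linear system for the three normalized quantities $\|\nabla u_{c_k}\|_2^2$, $|B(u_{c_k})|$, $\|u_{c_k}\|_p^p$, $\mu_{c_k}$ divided by the appropriate powers of $c_k$. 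Since $|B(u_{c_k})|$ is of lower order, the system degenerates in the limit and yields precisely the stated limits for $\|\nabla u_{c_k}\|_2^2/c_k^{(6-p)/(2-p\delta_p)}$, $\|u_{c_k}\|_p^p/c_k^{(6-p)/(2-p\delta_p)}$, $\mu_{c_k}/c_k^{2(p-2)/(2-p\delta_p)}$ and $|B(u_{c_k})|/c_k^{(6-p)/(2-p\delta_p)}\to 0$; the constants come out as multiples of $\kappa_{p,\lambda_3}$ exactly as written after inserting $\delta_p=\tfrac{3(p-2)}{2p}$.

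For the convergence of the rescaled minimizers, I would set $v_k(x):=\big[\tfrac{p|\lambda_3|}{2\gamma_{c_k}}\big]^{1/(p-2)}u_{c_k}\big(\tfrac{x+y_k}{\sqrt{2\delta_p\gamma_{c_k}}}\big)$ with $\gamma_{c_k}$ chosen so that $\|v_k\|_p^p$ and $\|\nabla v_k\|_2^2$ converge to finite positive constants (this is forced by the limits just obtained; note $\|v_k\|_2^2$ also converges to a positive constant by the mass constraint and the chosen scaling). Then $v_k$ is (up to the translation $y_k$, chosen via a concentration-compactness / Lieb translation argument to prevent vanishing, so that $v_k\rightharpoonup v\neq 0$) a bounded Palais-Smale-type sequence for a limiting functional: rescaling \eqref{eq1.1} and using that the quartic and dipolar terms carry an extra factor tending to $0$ (again because $|B(u_{c_k})|$ is of lower order, and the convolution kernel rescales favorably), $v_k$ solves $-\Delta v_k+(\tfrac{1}{\delta_p}-1)v_k=\tfrac{2}{p\delta_p}|v_k|^{p-2}v_k+o(1)$ in $H^{-1}$. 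The limit $v$ is a nonnegative solution of the limiting equation; by Gidas–Ni–Nirenberg symmetry and the uniqueness of the positive radial ground state, $v$ is a translate of $W_p$, and we absorb that translate into $y_k$ so that $v=W_p$. Strong $H^1$ convergence then follows from the fact that both $\|\nabla v_k\|_2^2$ and $\|v_k\|_p^p$ converge to the corresponding quantities for $W_p$ (no mass is lost), which upgrades weak to strong convergence.

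The main obstacle I expect is controlling the nonlocal dipolar term $\lambda_2\int(K\star|u|^2)|u|^2$ uniformly through the rescaling: $K$ is a Calderón–Zygmund–type kernel (not sign-definite, only defined as a principal value / via its Fourier symbol), so one must use the $L^2$-boundedness of the associated multiplier — i.e. the estimate $|B(u)|\leq\Lambda\|\widehat{|u|^2}\|_2^2\leq C\Lambda\|\nabla u\|_2^3\|u\|_2$ coming from \eqref{Lambda} — rather than any pointwise bound, and then verify that under the scaling $u_{c_k}\mapsto v_k$ this contribution genuinely vanishes. A secondary delicate point is ruling out vanishing of $\{v_k\}$ without a priori compactness of the constraint; this is handled by the standard dichotomy argument exploiting that the limiting equation has no nontrivial solution with arbitrarily small $L^p$-norm, so the positive lower bound on $\|v_k\|_p^p$ forces a nonvanishing translate.
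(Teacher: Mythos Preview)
Your overall strategy matches the paper's: obtain a sharp two-sided bound on $m(c_k,R_0)$, then combine the energy identity, the Pohozaev relation $P(u_{c_k})=0$ and the Euler--Lagrange equation to pin down all normalized quantities, and finally rescale and pass to the limit via concentration-compactness and uniqueness of $W_p$. There is, however, one logical ordering gap in your lower-bound step.

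You justify ``$|B(u)|\lesssim s^{3/2}c_k$ is of strictly higher order'' by asserting the scaling regime $s\sim c_k^{(6-p)/(2-p\delta_p)}$, but at that stage the only a priori control you have recorded is $\|\nabla u_{c_k}\|_2\le R_0(c_k)\le \tfrac{c_k}{c_*}t_{c_*}$, which gives merely $s\lesssim c_k^2$. For $p\in[\tfrac{14}{5},\tfrac{10}{3})$ one has $\tfrac{2(6-p)}{10-3p}\ge 4$, and then $c_k\,s^{3/2}\lesssim c_k^{\,4}$ is \emph{not} $o(c_k^{(6-p)/(2-p\delta_p)})$, so the argument as written does not close for the full range of $p$. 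The paper fixes this by invoking the Pohozaev identity \emph{first}: from $P(u_{c_k})=0$ one writes $E(u_{c_k})=\tfrac16\|\nabla u_{c_k}\|_2^2-\tfrac{2(3-p\delta_p)|\lambda_3|}{3p}\|u_{c_k}\|_p^p$, and since $E(u_{c_k})<0$, Gagliardo--Nirenberg yields $\|\nabla u_{c_k}\|_2^{2-p\delta_p}<\tfrac{4(3-p\delta_p)|\lambda_3|\mathcal C_p^p}{p}c_k^{p(1-\delta_p)}$ (this is \eqref{gradientboud}). Only with this sharper a priori bound does $|B(u_{c_k})|\le\Lambda\mathcal C_4^4\|\nabla u_{c_k}\|_2^3c_k$ become $O(c_k^{4(4-p)/(10-3p)})\cdot c_k^{(6-p)/(2-p\delta_p)}=o(c_k^{(6-p)/(2-p\delta_p)})$. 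So you should move the Pohozaev input \emph{before} the sharp lower bound, not after.

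Two places where the paper is slightly cleaner than your sketch: (i) rather than minimizing a one-variable function, the paper writes $E(u_{c_k})\ge\big(\tfrac12\|\nabla u_{c_k}\|_2^2-\tfrac{2|\lambda_3|}{p}\|u_{c_k}\|_p^p\big)+\tfrac12 B(u_{c_k})\ge m_0(c_k)+\tfrac12 B(u_{c_k})$, where $m_0(c_k)=\inf_{S_{c_k}}I=-\kappa_{p,\lambda_3}c_k^{(6-p)/(2-p\delta_p)}$ is computed exactly in Lemma~\ref{LeMa4.4.1}; this sidesteps any one-variable analysis. (ii) For the upgrade from weak to strong $H^1$ convergence, the paper exploits that the chosen rescaling produces the \emph{exact} identity $\|v_k\|_2^2=(\tfrac{p}{2\mathcal C_p^p})^{2/(p-2)}=\|W_p\|_2^2$ for every $k$ (see \eqref{NewbOUDS2}), giving $L^2$ convergence immediately, then $L^r$ by Gagliardo--Nirenberg, then $H^1$ by testing the equation; this is more direct than matching the $\|\nabla\cdot\|_2$ and $\|\cdot\|_p$ limits.
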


\begin{remark}\label{re1.3} 
Our main results complete the results of \cite{Ymas,lYmAs}, which studied  \eqref{gpt1.4} with $p \in (4, 6]$. Due to the dipolar term, the energy functional $E(u)$ is not invariant under rotations and this lack of symmetry prevents us from working in the radial space $H_{rad}^1(\R^3)$. In \cite{Ymas}, the authors obtained a saddle point of $E|_{S_c}$ and they recovered the compactness of the corresponding Palais-Smale sequences by using the monotonicity of the mountain pass level. In \cite{lYmAs}, the authors focused on searching global minimizers of $E|_{S_c}$. Under the assumptions of Theorem \ref{th1.1}, $E|_{S_c}$ admits a convex-concave geometry. Therefore, it is possible to expect the existence of a local minimizer and a mountain pass critical point for $E|_{S_c}$. For these reasons, the methods adopted in \cite{NSoa} and \cite{Ymas,lYmAs} are not suitable for proving Theorem \ref{th1.1}. Some new ideas are developed to study local minimization problems without symmetry. From the physical point of view, Theorem \ref{th1.1} $(4)$ shows that the introduction of the term ${\lambda _3}{| u |^{p-2}}u$ leads to a stabilization of a system which was originally unstable. In \cite{bj1}, J. Bellazzini et al. also realized such stabilization by adding a small trapping potential $\frac{{{a^2}}}{2}{| x |^{2}}$ (see \eqref{1.5}), which is crucial in compactness analysis. Theorem \ref{th1.3} provides a precise description of the asymptotic behavior of $u_c$ as the mass $c$ vanishes. Such type of limit behavior of minimizers corresponding to global minimization problems arising in two-dimensional Bose-Einstein condensate are well studied, see e.g. \cite{GyWy,GLwJ} and the references therein. Concerning with local minimization problems, this type of results is new, as far as we know. It is worth pointing out that our method is well adapted to equations without symmetry.

\end{remark}

\begin{remark}
The mass threshold value $c_*$ can be taken arbitrary large by taking $|\lambda_3|>0$ small enough. This fact implies that we allow a wider range of $c$ provided $|\lambda_3|>0$ is small.
\end{remark}

\begin{remark}
If $(\lambda_1,\lambda_2,\lambda_3)\!\in\! D_0\!\times\!\R^{-}$ and  $2\!<\!p\!<\!\frac{10}{3}$, we have $\inf _{u \in S_{c}} E(u)\!=\!-\infty$ (see \eqref{engytoinfty} below). In view of this, we are in the $L^2$-supercritical setting. Due to the lack of symmetry, it is still open to obtain the second critical point of $E|_{S_c}$ under the assumptions of Theorem \ref{th1.1}. As in the proof of Theorem 1.1 in \cite{as}, we claim in Theorem 1.1 that  \\
\indent $\textbf{(i)}$ ${u}_{c}$ is radially symmetric in the $(x_1,x_2)$-plane and axially symmetric with respect to the $x_3$-axis;\\
\indent $\textbf{(ii)}$ ${u}_{c} \in W^{2,q}\left(\mathbb{R}^{3}\right)$ for all $q\geq2$ and there exist positive constants $\nu$ and $C$ such that
$$
e^{\nu|x|}(|u_c(x)|+|\nabla u_c(x)|) \leq C, \forall x \in \mathbb{R}^{3}.
$$
In fact, the application of the Steiner symmetrization (see \cite{AlGi,rSaU,as}) implies claim $\textbf{(i)}$. Claim $\textbf{(ii)}$ follows from the arguments given in Step 6 of the proof of Theorem 2.4 in \cite{RpoL}. These methods are also adopted in \cite{Ymas,lYmAs} to prove similar results. 

\end{remark}

The proof of Theorem \ref{th1.1} is motivated by \cite{NSoa,eHtR}. We  give the outline of our proof. Observing that $E(u) \geq h_c(\|\nabla u\|_{2})$, it is useful to study the local minimization problem
$$
m(c, R_0):=\inf _{u \in V^c_{R_0}} E(u),
$$
where $V^c_{R_0}:=\left\{u \in S_{c}: {||\nabla u||}_2<R_0\right\}$. Here $R_0=R_0(c)>0$ is a zero point of
$$h_c(t)=\frac{1}{2}t^{2}-\frac{\Lambda \mathcal{C}_{4}^4 c }{2} t^{3}-\frac{2|\lambda_{3}|\mathcal{C}_{p}^p c^{p(1-\delta_p)} }{p}  t^{p\delta_p}.$$ 
To derive the compactness of the minimizing sequences for $m(c, R_0)$, the further step is to prove the sub-additivity of $m(c, R_0)$. This step is difficult since $R_0(c)$ depends on $c$. The author of \cite{NSoa} overcame this difficulty by using the coupled rearrangement of radially symmetric functions and a careful analysis of $R_0(c)$, however, some parameters are required to be very small. Adopting some new ideas of \cite{eHtR}, we obtain the exact lower and upper bounds of $R_0(c)$ given by
$$0<\bar{t}_c< R_0(c) <\frac{c}{c_*} {t}_{c_*}< {t}_{c_*}< {t}_c< R_1(c),~~~~\mbox{if}~~~~0\!<\!c\!<\!c_*;$$
$$0<\bar{t}_{c_*}< R_0(c)={t}_{c_*}=R_1(c),~~~~\mbox{if}~~~~c\!=\!c_*,$$
(see Lemma \ref{lem3.1} below). From the fact that $t_{c_*}$ is independent of $c$ (see \eqref{cost2.11}), we turn to study another local minimization problem
$$
m(c, {t}_{c_*}):=\inf _{u \in V^c_{{t}_{c_*}}} E(u)<0\leq\inf_{u \in \partial V^c_{{t}_{c_*}} } E(u),
$$
where $V^c_{{t}_{c_*}}:=\left\{u \in S_{c}: {||\nabla u||}_2<{t}_{c_*}\right\}$ and $\partial V^c_{{t}_{c_*}}:=\left\{u \in S_{c}: {||\nabla u||}_2={t}_{c_*}\right\}$. It is sufficient to study $m(c,{t}_{c_*})$ since $m(c,{t}_{c_*})=m(c,k)$ for any $ k \in [R_{0}(c), R_{1}(c)]$. Actually, we have
\begin{equation*} 
||\nabla u||_{2}\!\in\! [R_0(c), R_{1}(c)] \Longrightarrow E(u) \!\geq\! h_c(||\nabla u||_{2})\!\geq\!0\!>\!m(c,{t}_{c_*});
\end{equation*}
\begin{equation} \label{Outline2}
E(u)\!<\!0~~~~\mbox{and}~~~~||\nabla u||_{2}\!\leq\!R_{1}(c) \Longrightarrow ||\nabla u||_{2}\!<\!R_0(c)\!\leq\!\frac{c}{c_*} {t}_{c_*}.
\end{equation}
Then a scaling argument indicates the subadditivity of $m(c, {t}_{c_*})$, which is vital in excluding the dichotomy of any minimizing sequences corresponding to $m(c, {t}_{c_*})$ (see Lemmas \ref{LemA3.5}-\ref{LeMa3.4.1} below). Hence, the compactness of any minimizing sequences for $m(c, {t}_{c_*})$ follows. Also, Lemma \ref{LeMa3.4.1} indicates that
$m(c,{t}_{c_*})$ is attained by some $u_c \!\in\! V^c_{ {t}_{c_*}}$. Combined with \eqref{Outline2}, we have $||\nabla u_c||_{2}\!<\!R_0(c)$ and
$$ \inf _{u\in V^c_{R_0(c)}} E(u)\!=\!m(c,R_0(c))\!=\!m(c,  {t}_{c_*})\!=\!E(u_c). $$

To prove Theorem \ref{th1.3}, we borrow some ideas from \cite{Nbal}, which studied a global minimization problem associated to a fourth order Schr\"{o}dinger equation. The main ingredient is the refined upper bound of $m(c, R_0)$ (see Section 4), i.e.
$$m(c, R_0)\!<\!-\kappa_{p,\lambda_{3}} c^{\frac{6-p}{2-p\delta_p}}.$$
We reach this goal by estimating $m(c, R_0)$ with some suitable testing functions. Therefore, it is necessary to keep the testing functions staying in the admissible set $V^c_{R_0(c)}$, where the lower bound $\bar{t}_c<R_0(c)$ plays an important role in this procedure. Note also that, the estimate of the Lagrange multiplier $\mu_c$ and the precise description of the asymptotic behavior of $u_c$ all depend heavily on $m(c, R_0)\!<\!-\kappa_{p,\lambda_{3}} c^{\frac{6-p}{2-p\delta_p}}$.\\

The paper is organized as follows, in Section 2, we give some preliminary results. In Section 3, we prove the compactness of the minimizing sequences for $m(c,{t}_{c_*})$. In Section 4, we derive the refined upper bound of $m(c, R_0)$. We prove Theorems \ref{th1.1}-\ref{th1.3} in Section 5. \\

\noindent \textbf{Notations:}~~~~For $1 \!\le\! p \!<\!\infty $ and $u\!\in\!{L^p}({\R^3})$, we denote ${\left\| u \right\|_p}\!:=\! {({\int_{{\R^3}} {\left| u \right|} ^p})^{\frac{1}{p}}}$. The Hilbert spaces $H^{1}(\mathbb{R}^{3})$ and $H_{rad}^{1}(\mathbb{R}^{3})$ are defined as
$$H^{1}(\mathbb{R}^{3}) :=\{u \in L^{2}(\mathbb{R}^{3}) :\nabla u \in L^{2}(\mathbb{R}^{3})\},~~~~~~~~H_{rad}^{1}(\mathbb{R}^{3}) \!:=\!\{u(x) \!\in\! H^{1}(\mathbb{R}^{3}): u(x)\!=\!u(|x|)\},$$
with the inner product $(u,v): = \int_{{\R^3}} {\nabla u\nabla v}  + \int_{{\R^3}} {uv}$  and norm ${\left\| u \right\|} := (\left\| {\nabla u} \right\|_2^2 + \left\| u \right\|_2^2)^{\frac{1}{2}}$.
$H^{-1}({\R^3})$ is the dual space of $H^1({\R^3})$. The space $D^{1,2}(\mathbb{R}^{3})$ is defined as
$$D^{1,2}(\mathbb{R}^{3}) :=\{u \in L^6(\mathbb{R}^{3}) :\nabla u \in L^{2}(\mathbb{R}^{3})\},$$
with the semi-norm $||u||_{D^{1,2}(\mathbb{R}^{3})}\!=\!\left\| {\nabla u} \right\|_2$. We use $``\rightarrow"$ and $``\rightharpoonup"$ to denote the strong and weak convergence in the related function spaces respectively. $C$ and $C_{i}$ will denote positive constants. $\langle\cdot,\cdot\rangle$ denote the dual pair for any Banach space and its dual space.
$o_{n}(1)$ and $O_{n}(1)$ mean that $|o_{n}(1)|\to 0$ and $|O_{n}(1)|\leq C$ as $n\to+\infty$, respectively. The Fourier transform of $u\in L^{2}(\R^N)$ is denoted by $\hat{u}$.


\section{Preliminaries}

\setcounter{equation}{0}
In this section, we give some preliminary results. Define the Fourier transform of $u$ by
$\widehat u(\xi ) := \int_{{\R^3}} {{e^{ - ix \cdot \xi }}u(x)dx}$,
then we have

\begin{lemma} \label{2.1.}(\cite{cms}, Lemma 2.3)
The Fourier transform of $K$ is given by
\[\widehat K(\xi ) = \frac{{4\pi }}
{3}(3{\cos ^2}\theta  - 1) = \frac{{4\pi }}
{3}\Bigl( {\frac{{3\xi_3^2}}
{{|\xi {|^2}}} - 1} \Bigr) = \frac{{4\pi }}
{3}\Bigl( {\frac{{2\xi_3^2 - \xi_1^2 - \xi_2^2}}
{{|\xi {|^2}}}} \Bigr) \in \Bigl[ { - \frac{4}
{3}\pi ,\frac{8}
{3}\pi } \Bigr],\]
where $\theta$ is the angle between $\xi$ and the vector $(0,0,1)$.
\end{lemma}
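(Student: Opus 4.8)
The plan is to compute the Fourier transform of $K(x) = \frac{1-3\cos^2\theta(x)}{|x|^3}$ directly, exploiting the fact that $K$ is, up to constants, a second-order derivative of the Newtonian kernel. First I would observe that $1 - 3\cos^2\theta(x) = 1 - \frac{3x_3^2}{|x|^2} = -|x|^2\,\partial_{x_3}^2\!\left(\frac{1}{|x|}\right)\cdot |x|$ — more precisely, a short computation gives $\partial_{x_3}^2\bigl(\tfrac{1}{|x|}\bigr) = \frac{3x_3^2 - |x|^2}{|x|^5}$ away from the origin, so that $K(x) = -\partial_{x_3}^2\bigl(\tfrac{1}{|x|}\bigr)$ pointwise on $\mathbb{R}^3\setminus\{0\}$. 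The subtlety is that this identity must be promoted to an identity of tempered distributions: since $\tfrac{1}{|x|}$ is locally integrable but $K$ has a non-integrable singularity, one has to interpret $K$ via its principal value (equivalently, recognize that $\partial_{x_3}^2$ of a distribution picks up a Dirac-type contribution at the origin). The clean way is to write $-\partial_{x_3}^2\bigl(\tfrac{1}{|x|}\bigr) = \mathrm{p.v.}\,K(x) + \tfrac{4\pi}{3}\delta_0$ as distributions, which is the standard distributional Laplacian-type computation: averaging $\partial_{x_3}^2 f$ over directions gives $\tfrac13\Delta f$, and $\Delta\bigl(\tfrac{1}{|x|}\bigr) = -4\pi\delta_0$.

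Next I would take Fourier transforms. Using $\widehat{\partial_{x_3}^2 g}(\xi) = -\xi_3^2\,\widehat g(\xi)$ and the well-known $\widehat{\bigl(\tfrac{1}{|x|}\bigr)}(\xi) = \tfrac{4\pi}{|\xi|^2}$ (with the convention $\widehat u(\xi) = \int e^{-ix\cdot\xi}u(x)\,dx$), one gets
\[
\widehat{K}(\xi) = \widehat{\mathrm{p.v.}\,K}(\xi) = -\xi_3^2\cdot\frac{4\pi}{|\xi|^2} + \frac{4\pi}{3}\cdot 1 \cdot(-1)\cdot(-1) = \frac{4\pi}{3}\left(1 - \frac{3\xi_3^2}{|\xi|^2}\right),
\]
after carefully tracking the $\tfrac{4\pi}{3}\delta_0$ term whose Fourier transform is the constant $\tfrac{4\pi}{3}$ and which must be subtracted. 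Rewriting $1 - \tfrac{3\xi_3^2}{|\xi|^2} = \tfrac{|\xi|^2 - 3\xi_3^2}{|\xi|^2} = \tfrac{\xi_1^2+\xi_2^2-2\xi_3^2}{|\xi|^2}$, or equivalently $-\bigl(\tfrac{2\xi_3^2-\xi_1^2-\xi_2^2}{|\xi|^2}\bigr)$, and noting $\tfrac{3\xi_3^2}{|\xi|^2} = 3\cos^2\theta$ where $\theta$ is the angle between $\xi$ and $(0,0,1)$, yields exactly the stated formula $\widehat{K}(\xi) = \tfrac{4\pi}{3}(3\cos^2\theta - 1)$.

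Finally, for the range: since $0 \le \cos^2\theta \le 1$, we have $3\cos^2\theta - 1 \in [-1,2]$, hence $\widehat{K}(\xi) \in \bigl[-\tfrac{4}{3}\pi,\,\tfrac{8}{3}\pi\bigr]$, with the extreme values attained along $\xi_3 = 0$ (the endpoint $-\tfrac43\pi$) and along $\xi_1 = \xi_2 = 0$ (the endpoint $\tfrac83\pi$). The main obstacle is entirely the distributional bookkeeping at the origin — getting the constant $\tfrac{4\pi}{3}$ right so that $\widehat{K}$ comes out as a genuine bounded function rather than differing from the naive answer by an additive constant; everything else is routine. Since this lemma is quoted verbatim from \cite[Lemma 2.3]{cms}, in the paper itself it suffices to cite that reference, but the computation above is the content.
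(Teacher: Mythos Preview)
The paper does not prove this lemma at all: it is simply stated with a citation to \cite[Lemma~2.3]{cms}. Your proposal therefore goes well beyond what the paper does, supplying an actual computation. The strategy you outline --- recognize $K(x) = -\partial_{x_3}^2(1/|x|)$ pointwise away from the origin, promote this to a distributional identity with a $\tfrac{4\pi}{3}\delta_0$ correction, then Fourier-transform --- is the standard route and is correct in principle.

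There is, however, a sign slip in your displayed computation that you should fix. From your own identity $-\partial_{x_3}^2\bigl(\tfrac{1}{|x|}\bigr) = \mathrm{p.v.}\,K + \tfrac{4\pi}{3}\delta_0$ one obtains $\mathrm{p.v.}\,K = -\partial_{x_3}^2\bigl(\tfrac{1}{|x|}\bigr) - \tfrac{4\pi}{3}\delta_0$, and hence
\[
\widehat{K}(\xi) \;=\; -\bigl(-\xi_3^2\bigr)\frac{4\pi}{|\xi|^2} \;-\; \frac{4\pi}{3}
\;=\; \frac{4\pi}{3}\Bigl(\frac{3\xi_3^2}{|\xi|^2} - 1\Bigr),
\]
not $\tfrac{4\pi}{3}\bigl(1 - \tfrac{3\xi_3^2}{|\xi|^2}\bigr)$ as your display reads. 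Your concluding sentence asserts the correct formula $\widehat{K}(\xi) = \tfrac{4\pi}{3}(3\cos^2\theta - 1)$, but that is the negative of what your intermediate display actually gives, so as written the argument is internally inconsistent. This is pure bookkeeping --- both signs flip (the derivative term gains a sign from the minus in front of $\partial_{x_3}^2$, and the delta term is \emph{subtracted}, not added) --- and once corrected your argument is complete. The range claim at the end is fine.
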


\begin{lemma} \label{2.2.} 
Let $u\!\in\! H^1(\R^3)$, $B(u)$ and $\Lambda$ are defined in \eqref{Lambda}.  Then
$$ B(u) \!=\! \frac{1}
{{{{(2\pi )}^3}}}\int_{{\mathbb{R}^3}} {[{\lambda_1} \!+\! {\lambda_2}\widehat K(\xi )]\big|\widehat{{{|u|}^2}}{\big|^2}d\xi },~~~~~~~~|B(u)| \!\le\! \Lambda\| u \|_4^4.$$
Moreover, if $u\!\in\! S_c$ and $({\lambda _1},{\lambda _2}) \!\in\! D_0$, then we have $B(u)\!<\!0$, where $D_0$ is given in \eqref{D_0}.
\end{lemma}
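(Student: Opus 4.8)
The plan is to pass to Fourier side using the Plancherel theorem. First I would recall that for $u\in H^1(\R^3)$, the function $|u|^2$ lies in $L^1\cap L^3(\R^3)$ (by Sobolev embedding $H^1\hookrightarrow L^6$), so its Fourier transform $\widehat{|u|^2}$ is well defined, bounded and square-integrable; this justifies all the manipulations below. The quartic term is handled by the elementary identity $\int_{\R^3}|u|^4\,dx = \int_{\R^3}(|u|^2)(|u|^2)\,dx$, which by Plancherel (with the convention $\widehat u(\xi)=\int e^{-ix\cdot\xi}u\,dx$, so that $\int fg = (2\pi)^{-3}\int \widehat f\,\overline{\widehat g}$) equals $\frac{1}{(2\pi)^3}\int_{\R^3}|\widehat{|u|^2}|^2\,d\xi$. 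For the dipolar convolution term, I would write $\int_{\R^3}(K\star|u|^2)|u|^2\,dx$ and use the convolution theorem $\widehat{K\star|u|^2} = \widehat K\cdot\widehat{|u|^2}$ together with Plancherel to obtain $\frac{1}{(2\pi)^3}\int_{\R^3}\widehat K(\xi)|\widehat{|u|^2}(\xi)|^2\,d\xi$ (here one uses that $\widehat K$ is real-valued, by Lemma \ref{2.1.}, and that $|u|^2$ is real so $\overline{\widehat{|u|^2}(\xi)} = \widehat{|u|^2}(-\xi)$, combined with the evenness of $\widehat K$). Adding the two contributions with coefficients $\lambda_1,\lambda_2$ gives the claimed integral representation of $B(u)$.

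For the inequality $|B(u)|\le \Lambda\|u\|_4^4$, I would bound the integrand pointwise: $|\lambda_1 + \lambda_2\widehat K(\xi)| \le \max\{|\lambda_1 - \tfrac{4\pi}{3}\lambda_2|, |\lambda_1 + \tfrac{8\pi}{3}\lambda_2|\}$, since by Lemma \ref{2.1.} the value $\widehat K(\xi)$ ranges over $[-\tfrac{4\pi}{3}, \tfrac{8\pi}{3}]$ and the affine function $s\mapsto \lambda_1 + \lambda_2 s$ attains its extrema on this interval at the endpoints. Pulling this constant out of the integral and using the already-established Fourier representation of $\|u\|_4^4$ yields $|B(u)| \le (2\pi)^{-3}\max\{\cdots\}\int|\widehat{|u|^2}|^2\,d\xi = \Lambda\|u\|_4^4$.

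For the last assertion, suppose $u\in S_c$ and $(\lambda_1,\lambda_2)\in D_0$. I would argue that the integrand $[\lambda_1 + \lambda_2\widehat K(\xi)]|\widehat{|u|^2}(\xi)|^2$ is pointwise $\le 0$ and strictly negative on a set of positive measure. In the case $\lambda_1 < \tfrac{4\pi}{3}\lambda_2 \le 0$: if $\lambda_2 = 0$ then $\lambda_1 < 0$ and $\lambda_1 + \lambda_2\widehat K = \lambda_1 < 0$ everywhere; if $\lambda_2 < 0$ then since $\widehat K(\xi)\le \tfrac{8\pi}{3}$... actually one checks $\lambda_1 + \lambda_2\widehat K(\xi) \le \lambda_1 + \lambda_2\cdot\tfrac{(-4\pi/3)}{1}$? — here I must be careful: for $\lambda_2<0$ the maximum of $\lambda_1+\lambda_2\widehat K$ over $\widehat K\in[-\tfrac{4\pi}{3},\tfrac{8\pi}{3}]$ is $\lambda_1 - \tfrac{4\pi}{3}\lambda_2 < 0$ by hypothesis, so the integrand is $\le 0$ throughout. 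Symmetrically, in the case $\lambda_1 < -\tfrac{8\pi}{3}\lambda_2 \le 0$ one has $\lambda_2\ge 0$ and the maximum of $\lambda_1 + \lambda_2\widehat K$ is $\lambda_1 + \tfrac{8\pi}{3}\lambda_2 < 0$. Since $u\not\equiv 0$ forces $\widehat{|u|^2}\not\equiv 0$, the integral is strictly negative, i.e. $B(u)<0$. The one point requiring a little care — the main (minor) obstacle — is verifying that $\lambda_1 + \lambda_2\widehat K(\xi) < 0$ on a set of positive measure rather than merely $\le 0$; this follows because $\widehat{|u|^2}(0) = \|u\|_2^2 = c^2 > 0$ and $\widehat{|u|^2}$ is continuous, so $|\widehat{|u|^2}|^2 > 0$ on a neighborhood of the origin where simultaneously $\lambda_1 + \lambda_2\widehat K(\xi) \le \max\{\cdots\} < 0$.
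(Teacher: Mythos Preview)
Your proposal is correct and follows essentially the same route as the paper's proof, which simply invokes the Plancherel identity for the Fourier representation (citing \cite{bcd,bj1,Ymas,lYmAs}) and appeals to Lemma~\ref{2.1.} for the bound and the sign. You have supplied the details that the paper omits: the integrability of $|u|^2$, the endpoint analysis of the affine map $s\mapsto\lambda_1+\lambda_2 s$ on $[-\tfrac{4\pi}{3},\tfrac{8\pi}{3}]$, and the nonvanishing of $\widehat{|u|^2}$. One remark: your final ``minor obstacle'' is in fact no obstacle at all, since in every case of $D_0$ you have already shown $\sup_\xi(\lambda_1+\lambda_2\widehat K(\xi))<0$ strictly, so the integrand is strictly negative wherever $\widehat{|u|^2}\neq 0$, and the continuity argument at $\xi=0$ is unnecessary.
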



\begin{proof}
The former is obtained by Plancherel identity (see e.g. \cite{bcd}, Theorem 1.25) and the detailed proof can be found in \cite{bj1,Ymas,lYmAs}. The latter is a direct conclusion of Lemma~\ref{2.1.}.
\end{proof}

\begin{lemma} (Gagliardo-Nirenberg inequality, \cite{Wein}) \label{lem2.2}
Let $p\!\in\!(2,6)$ and $\delta_p\!=\!\frac{3(p-2)}{2p}$. Then there exists a constant $\mathcal{C}_{p}\!=\!\Big( \frac{p}{2 ||W_p||^{p-2}_{2}} \Big)^{\frac{1}{p}}\!>\!0$ such that
\begin{equation} \label{equ2.2}
 ||u||_{p} \leq \mathcal{C}_{p} \left\|\nabla u\right\|_{2}^{\delta_p} \left\|u\right\|_{2}^{(1-\delta_p)}, \qquad \forall u \in {H}^{1}(\mathbb{R}^{3}),
\end{equation}
where $W_p$ is the unique positive radial solution of
$ -\Delta W\!+\!(\frac{1}{\delta_p}-\!1)W \!=\!\frac{2}{p\delta_p}|W|^{p-2}W$.
\end{lemma}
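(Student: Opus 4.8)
The plan is to realize $\mathcal{C}_p$ as the best constant in \eqref{equ2.2} and to evaluate it through the ground state $W_p$, following Weinstein \cite{Wein}. First I would introduce the Weinstein quotient
\[
\mathcal{J}_p(u):=\frac{\|\nabla u\|_2^{p\delta_p}\,\|u\|_2^{p(1-\delta_p)}}{\|u\|_p^{p}},\qquad u\in H^1(\mathbb{R}^3)\setminus\{0\},
\]
and set $\mathcal{C}_p:=\big(\inf_{u\ne 0}\mathcal{J}_p(u)\big)^{-1/p}$. Since $2<p<6$, the interpolation inequality $\|u\|_p\le\|u\|_2^{1-\delta_p}\|u\|_6^{\delta_p}$ (the exponent being exactly the $\delta_p$ of the statement) combined with the Sobolev inequality $\|u\|_6\le C\|\nabla u\|_2$ shows $0<\inf\mathcal{J}_p<+\infty$; hence $\mathcal{C}_p\in(0,+\infty)$ and \eqref{equ2.2} holds with this constant, and it remains only to check that $\mathcal{C}_p^{p}=\frac{p}{2\|W_p\|_2^{p-2}}$.

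Next I would prove that the infimum is attained, which is the substantive point. The quotient $\mathcal{J}_p$ is invariant under the two-parameter rescaling $u\mapsto a\,u(\lambda\,\cdot)$, $a,\lambda>0$, so along a minimizing sequence $\{u_n\}$ I may renormalize so that $\|\nabla u_n\|_2=\|u_n\|_2=1$; then $\{u_n\}$ is bounded in $H^1(\mathbb{R}^3)$. Using $\big|\nabla|u_n|\big|\le|\nabla u_n|$ a.e., and then Schwarz symmetrization (which does not increase the Dirichlet energy while preserving the $L^2$ and $L^p$ norms), I may also assume each $u_n$ is nonnegative, radial and radially nonincreasing. The compact embedding $H^1_{rad}(\mathbb{R}^3)\hookrightarrow L^p(\mathbb{R}^3)$, valid for $2<p<6$, then gives, along a subsequence, $u_n\rightharpoonup v$ in $H^1$ and $u_n\to v$ in $L^p$; in particular $\|v\|_p^p=\lim_n\|u_n\|_p^p=\mathcal{C}_p^{p}>0$, so $v\not\equiv0$, while weak lower semicontinuity yields $\|\nabla v\|_2\le1$ and $\|v\|_2\le1$, so that $\mathcal{J}_p(v)\le\mathcal{C}_p^{-p}=\inf\mathcal{J}_p$. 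Hence $v$ is a minimizer, and necessarily $\|\nabla v\|_2=\|v\|_2=1$.

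Finally, the minimizer solves the Euler--Lagrange equation $\frac{d}{d\varepsilon}\mathcal{J}_p(v+\varepsilon\varphi)\big|_{\varepsilon=0}=0$ for all $\varphi\in H^1(\mathbb{R}^3)$; with the above normalization, and after dividing by $\delta_p$, this reads
\[
-\Delta v+\Big(\tfrac{1}{\delta_p}-1\Big)v=\frac{1}{\delta_p\,\mathcal{C}_p^{p}}\,|v|^{p-2}v\qquad\text{in }\mathbb{R}^3.
\]
By the strong maximum principle $v>0$, and $v$ is radial; by the classical uniqueness theorem of Kwong for positive radial solutions of $-\Delta w+\omega w=|w|^{p-2}w$ in $\mathbb{R}^3$, the function $v$ must coincide with a rescaling of $W_p$, and matching the coefficient of the nonlinear term forces $v=\big(\tfrac{2\,\mathcal{C}_p^{p}}{p}\big)^{\frac{1}{p-2}}W_p$. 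Imposing $\|v\|_2=1$ in this identity then gives $\big(\tfrac{2\,\mathcal{C}_p^{p}}{p}\big)^{\frac{2}{p-2}}\|W_p\|_2^2=1$, i.e. $\mathcal{C}_p^{p}=\frac{p}{2\|W_p\|_2^{p-2}}$, which is the desired value. (Equivalently, one may use scale-invariance to evaluate $\mathcal{C}_p^{-p}=\mathcal{J}_p(W_p)$ directly, combining the Nehari identity and the Pohozaev identity for the $W_p$-equation to obtain $\|\nabla W_p\|_2^2=\|W_p\|_2^2$ and $\|W_p\|_p^p=\tfrac{p}{2}\|W_p\|_2^2$, which lead to the same constant.) The only non-elementary ingredient here is Kwong's uniqueness theorem; the rest is routine.
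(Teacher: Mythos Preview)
Your argument is correct and is precisely Weinstein's original proof from \cite{Wein}, which the paper simply cites without reproducing; the lemma is stated in the paper as a known result with no proof given. So there is nothing to compare: you have supplied exactly the standard argument that the citation points to, including the evaluation of the sharp constant via the Pohozaev/Nehari identities for $W_p$ (yielding $\|\nabla W_p\|_2^2=\|W_p\|_2^2$ and $\|W_p\|_p^p=\tfrac{p}{2}\|W_p\|_2^2$), which is consistent with the specific normalization of $W_p$ adopted in the paper.
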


By using Lemma~\ref{lem2.2}, the energy functional introduced by \eqref{energyF} can be reduced to
$$
E(u)=\frac{1}{2}\|\nabla u\|_{2}^{2}+\frac{1}{2} \frac{1}{(2 \pi)^{3}} \int_{\mathbb{R}^{3}}\left(\lambda_{1}+\lambda_{2} \widehat{K}(\xi)\right)\left|\widehat{|u|^{2}}(\xi)\right|^{2} d \xi+\frac{2\lambda_{3}}{p}\|u\|_{p}^{p}.
$$
Letting $u_{t}(x)\!=\!t^{\frac{3}{2}}u(tx)$ for $t\!>\!0$, we see that $\inf _{u \in S_{c}} E(u)\!=\!-\infty$ if $(\lambda_1, \lambda_2, \lambda_3)\!\in\! D_0\times\R^{-}$ and $2\!<\!p\!<\!\frac{10}{3}$. In fact, for $(\lambda_1, \lambda_2, \lambda_3)\!\in\! D_0\times\R^{-}$ and $u \in S_{c}$, we have $B(u)\!<\!0$, $p\delta_p\!<\!2$ and
\begin{equation} \label{engytoinfty}
E(u_t)=\frac{t^2}{2}\|\nabla u\|_{2}^{2}+\frac{t^3}{2}B(u)+\frac{2\lambda_{3}t^{p\delta_p}}{p}\|u\|_{p}^{p} \to -\infty~~~~\mbox{as}~~~~t \to +\infty.
\end{equation}
The global minimization method used in \cite{ch,lYmAs} breaks down. Therefore, we introduce a class of local minimization problems
\begin{equation} \label{equ2.14}
 m(c,k):=\inf _{u \in V^c_{k}} E(u), \forall k>0,
\end{equation}
where
\begin{equation} \label{equ2.13}
V^c_{k} :=\left\{u \in S_c :||\nabla u||_{2}<k\right\},~~~~~~~~\partial V^c_{k} :=\left\{u \in S_c :||\nabla u||_{2}=k\right\}.
\end{equation}
For $\lambda_{3}\in \mathbb{R}^{-}$, Lemmas \ref{2.2.}-\ref{lem2.2} indicate  that
\begin{align} \label{eq2.3}
E(u) \geq\frac{1}{2}\|\nabla u\|_{2}^{2}-\frac{\Lambda \mathcal{C}_{4}^4 c }{2} \left\|\nabla u\right\|_{2}^{3} -\frac{2|\lambda_{3}|\mathcal{C}_{p}^p c^{p(1-\delta_p)} }{p}  \left\|\nabla u\right\|_{2}^{p\delta_p} ,~~~~\forall u\in S_c.
\end{align}
From (\ref{eq2.3}), it is useful to consider the function $h_c : \mathbb{R}^{+} \rightarrow \mathbb{R}$:
$$h_c(t)=\frac{1}{2}t^{2}-\frac{\Lambda \mathcal{C}_{4}^4 c }{2} t^{3}-\frac{2|\lambda_{3}|\mathcal{C}_{p}^p c^{p(1-\delta_p)} }{p}  t^{p\delta_p}.$$
Since $p\delta_p\!<\!2$ for $2\!<\!p\!<\!\frac{10}{3}$, we have that $h_c(0^+)\!=\!0^{-}$ and $h_c(+\infty)\!=\!-\infty $. 


\begin{lemma}\label{lem3.1}
Let $c\!>\!0$, $\lambda_{3}\!<\!0$, $2\!<\!p\!<\!\frac{10}{3}$,  $t_c\!=\!\frac{2\!-\!p\delta_p}{(3\!-\!p\delta_p)\Lambda \mathcal{C}_{4}^4 c}$ and $\bar{t}_c:=\Big[\frac{4|\lambda_{3}|\mathcal{C}_{p}^p c^{p(1-\delta_p)} }{p}\Big]^{\frac{1}{2-p\delta_p}}$. Then\\
\indent (i) If $c\!<\!c_*$, $h_c(t)$ has a local minimum at negative level and a global maximum at positive level; moreover, there exist $R_0=R_0(c)$ and $R_1=R_1(c)$ such that
$$0<\bar{t}_c< R_0 <\frac{c}{c_*} {t}_{c_*}< {t}_{c_*}< {t}_c< R_1,~~~~~~~~h_c(R_0)=0=h_c(R_1),~~~~~~~~h_c(t)\!>\!0 \Leftrightarrow t\!\in\! (R_0,R_1);$$
\indent (ii) If $c\!=\!c_*$, $h_{c_*}(t)$ has a local minimum at negative level and a global maximum at level $0$; moreover, we have
$h_{c_*}( {t}_{c_*})=0,~~~~~~~~h_{c_*}(t)\!<\!0 \Leftrightarrow t\!\in\!(0, {t}_{c_*}) \cup ({t}_{c_*},+\infty)$.
\end{lemma}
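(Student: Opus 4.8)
The plan is to reduce everything to the auxiliary function $f_c(t):=t^{-p\delta_p}h_c(t)$ on $(0,+\infty)$. Since $t^{p\delta_p}>0$, one has $\mathrm{sign}\,h_c(t)=\mathrm{sign}\,f_c(t)$ and the positive zeros of $h_c$ and $f_c$ coincide, so it suffices to analyze $f_c$. Differentiating, $f_c'(t)=\tfrac12 t^{1-p\delta_p}\big[(2-p\delta_p)-(3-p\delta_p)\Lambda\mathcal{C}_4^4 c\,t\big]$, which vanishes only at $t=t_c$; hence $f_c$ is strictly increasing on $(0,t_c)$, strictly decreasing on $(t_c,+\infty)$, with $f_c(0^+)=-\tfrac{2|\lambda_3|\mathcal{C}_p^p c^{p(1-\delta_p)}}{p}<0$ and $f_c(+\infty)=-\infty$. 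Thus $f_c$ has a single maximum, at $t_c$, and the whole picture is decided by the sign of $f_c(t_c)$. Using $\Lambda\mathcal{C}_4^4 c\, t_c=\tfrac{2-p\delta_p}{3-p\delta_p}$, a short computation gives $f_c(t_c)=\tfrac{t_c^{\,2-p\delta_p}}{2(3-p\delta_p)}-\tfrac{2|\lambda_3|\mathcal{C}_p^p c^{p(1-\delta_p)}}{p}$; after collecting the powers of $c$ (here $p(1-\delta_p)+2-p\delta_p=2(4-p)>0$), this is a strictly decreasing function of $c>0$, tending to $+\infty$ as $c\to0^+$ and to $-\infty$ as $c\to+\infty$, and vanishing exactly at $c=c_*$ by the definition of $c_*$ in \eqref{cost2.11}. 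Hence $f_c(t_c)>0$ for $c<c_*$, while $f_{c_*}(t_{c_*})=0$.

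Both cases then follow at once. For $c<c_*$, the monotonicity of $f_c$ together with $f_c(0^+)<0$, $f_c(t_c)>0$ and $f_c(+\infty)=-\infty$ produces exactly two positive zeros $R_0<t_c<R_1$ with $f_c>0$ on $(R_0,R_1)$ and $f_c<0$ on $(0,R_0)\cup(R_1,+\infty)$; translating back, $h_c(R_0)=h_c(R_1)=0$ and $h_c(t)>0\Leftrightarrow t\in(R_0,R_1)$. For $c=c_*$, the maximal value $f_{c_*}(t_{c_*})=0$ forces $f_{c_*}\le0$ with equality only at $t_{c_*}$, that is $h_{c_*}(t_{c_*})=0$ and $h_{c_*}(t)<0$ for $t\in(0,t_{c_*})\cup(t_{c_*},+\infty)$. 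The assertions on the critical points of $h_c$ require no explicit computation: since $h_c(0^+)=0^-$, the continuous function $h_c$ attains on $[0,R_0]$ (resp. on $[0,t_{c_*}]$ when $c=c_*$) an interior minimum at a negative value, hence a local minimum at negative level; and on $[R_0,+\infty)$ (resp. on $(0,+\infty)$ when $c=c_*$) it attains its global maximum, which is positive for $c<c_*$ because $h_c(t_c)>0$, and equals $0$ for $c=c_*$.

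It remains to prove the chain of inequalities in (i). The strict inequalities $0<\bar t_c$, $\tfrac{c}{c_*}t_{c_*}<t_{c_*}$ and $t_{c_*}<t_c$ are elementary consequences of $c<c_*$ and the explicit forms of $t_c$, $t_{c_*}$, and $t_c<R_1$ was obtained above. For $\bar t_c<R_0$ the crucial identity is $f_c(\bar t_c)=-\tfrac{\Lambda\mathcal{C}_4^4 c}{2}\bar t_c^{\,3-p\delta_p}<0$, which holds because $\tfrac12\bar t_c^{\,2-p\delta_p}$ cancels $\tfrac{2|\lambda_3|\mathcal{C}_p^p c^{p(1-\delta_p)}}{p}$ by the definition of $\bar t_c$; combined with $\bar t_c<t_c$ and the increasingness of $f_c$ on $(0,t_c)$ (where $f_c(R_0)=0$), this gives $\bar t_c<R_0$. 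The inequality $\bar t_c<t_c$ reduces, after raising to the power $2-p\delta_p$, to $c^{2(4-p)}<(3-p\delta_p)c_*^{2(4-p)}$, which holds for every $c\le c_*$ since $3-p\delta_p>1$. Finally $R_0<\tfrac{c}{c_*}t_{c_*}$ follows by evaluating $f_c$ at $s:=\tfrac{c}{c_*}t_{c_*}$: one has $s=\tfrac{c^2}{c_*^2}t_c<t_c$, and rewriting $f_c(s)$ with the identities for $t_{c_*}$ and $c_*$ in the variable $r:=(c/c_*)^2\in(0,1)$ yields $f_c(s)>0\Leftrightarrow g(r)>1$, where $g(r):=(3-p\delta_p)r^{-(p-2)/2}-(2-p\delta_p)r^{(4-p)/2}$; since $g(1)=1$ and $g'(r)<0$ on $(0,+\infty)$ (both terms of $g'$ being negative because $2<p<4$), one gets $g(r)>1$ for $r<1$, hence $f_c(s)>0$, and monotonicity of $f_c$ on $(0,t_c)$ gives $R_0<s$.

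The step I expect to be the main obstacle is this last one: showing $f_c\big(\tfrac{c}{c_*}t_{c_*}\big)>0$ is the only place requiring a genuine (if still elementary) monotonicity argument rather than a one-line sign check, and it forces careful bookkeeping of how the powers of $c$, $c_*$, $\Lambda$, $\mathcal{C}_4$ and $\mathcal{C}_p$ recombine through the definitions of $t_{c_*}$ and $c_*$. All other steps are routine one-variable calculus.
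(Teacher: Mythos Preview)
Your proposal is correct and follows essentially the same approach as the paper: both hinge on the auxiliary function $t^{-p\delta_p}h_c(t)$ (the paper writes $\varphi_c(t)=\tfrac12 t^{2-p\delta_p}-\tfrac{\Lambda\mathcal{C}_4^4 c}{2}t^{3-p\delta_p}$ and compares it to the constant $\tfrac{2|\lambda_3|\mathcal{C}_p^p c^{p(1-\delta_p)}}{p}$, which is exactly your $f_c$ split into two pieces), identify $t_c$ as its unique maximizer, and determine the sign picture from the sign of the maximal value. The only organizational differences are: (a) the paper separately analyzes $h_c'$ via a second auxiliary function $\psi_c$ to show $h_c$ has \emph{exactly} two critical points (and checks $c_*<c^*$), whereas you argue the existence of the required local minimum and global maximum more softly from the sign pattern of $h_c$---this is sufficient for the lemma as stated; (b) for $R_0<\tfrac{c}{c_*}t_{c_*}$ the paper evaluates $\varphi_c(\tfrac{c}{c_*}t_{c_*})$ through a direct chain of inequalities, while you recast it as the monotonicity of $g(r)=(3-p\delta_p)r^{-(p-2)/2}-(2-p\delta_p)r^{(4-p)/2}$, which is equivalent; (c) for $\bar t_c<R_0$ the paper compares $h_c$ with $g_c(t)=\tfrac12 t^2-\tfrac{2|\lambda_3|\mathcal{C}_p^p c^{p(1-\delta_p)}}{p}t^{p\delta_p}$, while you directly evaluate $f_c(\bar t_c)<0$---again the same computation. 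Your version is slightly more streamlined by avoiding the $\psi_c$ step.
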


\begin{proof}
(i) We first prove that $h_c$ has exactly two critical points. In fact,
$$
h_c'(t)=0 \Longleftrightarrow \psi_c(t)=2|\lambda_{3}|\delta_p\mathcal{C}_{p}^p c^{p(1-\delta_p)}, ~~~~\mbox{with}~~~~\psi_c(t)=t^{2-p\delta_p}-\frac{3\Lambda \mathcal{C}_{4}^4 c }{2} t^{3-p\delta_p}.
$$
Clearly, $\psi_c \nearrow$ on $[0,\hat{t}_c)$, $\searrow$ on $(\hat{t}_c,+\infty)$, where $\hat{t}_c\!=\!\frac{2(2\!-\!p\delta_p)}{3(3\!-\!p\delta_p)\Lambda \mathcal{C}_{4}^4 c}$. 
Since $p\delta_p\!<\!2$, we have $\max_{t\geq0}\psi_c(t)\!=\!\psi_c(\hat{t}_c)
\!=\!\frac{\hat{t}_c^{2\!-\!p\delta_p}}
{3\!-\!p\delta_p}\!>\!2|\lambda_{3}|\delta_p\mathcal{C}_{p}^p c^{p(1-\delta_p)}$ provided
\begin{align} \label{c^*}
c\!<\!c^*\!:=\!\Big\{ \frac{1}{2|\lambda_{3}|\delta_p(3\!-\!p\delta_p)\mathcal{C}_{p}^p } \Big[\frac{2(2\!-\!p\delta_p)}{3(3\!-\!p\delta_p)\Lambda \mathcal{C}_{4}^4}\Big]^{2\!-\!p\delta_p} \Big\}^{\frac{1}{2(4-p)}}.
\end{align} 
As $\psi_c(0^+)\!=\!0^+$ and $\psi_c(+\infty)\!=\!-\infty$, we see that $h_c$ has exactly two critical points if $c\!<\!c^*$.

Notice that
$$ h_c(t)>0 \Longleftrightarrow \varphi_c(t)>\frac{2|\lambda_{3}|\mathcal{C}_{p}^p c^{p(1-\delta_p)} }{p}, ~~~~\mbox{with}~~~~\varphi_c(t)=\frac{1}{2}t^{2-p\delta_p}-\frac{\Lambda \mathcal{C}_{4}^4 c }{2} t^{3-p\delta_p}.$$
Letting $ t_c\!=\!\frac{2\!-\!p\delta_p}{(3\!-\!p\delta_p)\Lambda \mathcal{C}_{4}^4 c}$ and $c_*\!:=\!\Big\{ \frac{p}{4(3\!-\!p\delta_p)|\lambda_{3}|\mathcal{C}_{p}^p } \Big[\frac{(2\!-\!p\delta_p)}{(3\!-\!p\delta_p)\Lambda \mathcal{C}_{4}^4}\Big]^{2\!-\!p\delta_p} \Big\}^{\frac{1}{2(4-p)}}$, by the fact that
$$
c\!<\!c_* \Longleftrightarrow \max_{t\geq0}\varphi_c(t)=\varphi_c( {t}_c)
\!=\!\frac{{t}_c^{2\!-\!p\delta_p}}
{2(3\!-\!p\delta_p)}>\frac{2|\lambda_{3}|\mathcal{C}_{p}^p c^{p(1-\delta_p)} }{p},
$$
we have $h_c(t)\!>\!0$ on an open interval $(R_0,R_1)$ if and only if $c\!<\!c_*$. Since $(\frac{3}{2})^{p\delta_p}>\frac{9}{8}p\delta_p$, we get $c_*\!<\!c^*$. Combined with $h_c(0^+)=0^{-}$ and $h_c(+\infty)=-\infty$, we see that $h_c$ has a local minimum point at negative level in $(0,R_0)$ and a global maximum point at positive level in $(R_0,R_1)$.  We also deduce that $\varphi_c( {t}_{c_*})>\varphi_{c_*}( {t}_{c_*})=\frac{2|\lambda_{3}|\mathcal{C}_{p}^p c_*^{p(1-\delta_p)} }{p}>\frac{2|\lambda_{3}|\mathcal{C}_{p}^p c^{p(1-\delta_p)} }{p}$ and
\begin{align*}
 \varphi_c\big(\frac{c}{c_*} {t}_{c_*}\big)
&=\frac{{t}_{c_*}^{2\!-\!p\delta_p}}
{2}\Big[1-\frac{ 2-p\delta_p}{3-p\delta_p}
\big(\frac{c}{c_*}\big)^2 \Big]\big(\frac{c}{c_*}\big)^{2-p\delta_p}
>\frac{{t}_{c_*}^{2\!-\!p\delta_p}}
{2(3\!-\!p\delta_p)} \big(\frac{c}{c_*}\big)^{2-p\delta_p} \\
&=\frac{2|\lambda_{3}|\mathcal{C}_{p}^p c_*^{p(1-\delta_p)} }{p}\big(\frac{c}{c_*}\big)^{2-p\delta_p}=\frac{2|\lambda_{3}|\mathcal{C}_{p}^p c^{p(1-\delta_p)} }{p}\big(\frac{c_*}{c}\big)^{p-2} \\
&>\frac{2|\lambda_{3}|\mathcal{C}_{p}^p c^{p(1-\delta_p)} }{p},
\end{align*}
which gives $h_c({t}_{c_*})\!>\!0$, $h_c(\frac{c}{c_*} {t}_{c_*})\!>\!0$, and hence $0\!<\! R_0 \!<\!\frac{c}{c_*} {t}_{c_*}\!<\! {t}_{c_*}\!<\! {t}_c\!< \! R_1$. Finally, the fact that   $h_c(t)\!\leq\!g_c(t)=\frac{1}{2}t^{2}-\frac{2|\lambda_{3}|\mathcal{C}_{p}^p c^{p(1-\delta_p)} }{p}t^{p\delta_p}$ leads to $R_0\!>\!\bar{t}_c$, where $\bar{t}_c:=\Big[\frac{4|\lambda_{3}|\mathcal{C}_{p}^p c^{p(1-\delta_p)} }{p}\Big]^{\frac{1}{2-p\delta_p}}$.


(ii) We omit the details as the proof is similar to that of (i). We emphasis that
$$
R_0\big(c_*\big)= {t}_{c_*}=R_1\big(c_*\big),~~~~~~~~\varphi_{c_*}( {t}_{c_*})=\frac{2|\lambda_{3}|\mathcal{C}_{p}^p c_*^{p(1-\delta_p)} }{p},~~~~~~~~\psi_{c_*}( {t}_{c_*})>2|\lambda_{3}|\delta_p\mathcal{C}_{p}^p c_*^{p(1-\delta_p)}.
$$
\end{proof}

Take $c\!=\!c_*$ in ${t}_{c}$ (see Lemma \ref{lem3.1}), then ${t}_{c_*}\!:=\!\!=\!\frac{2\!-\!p\delta_p}{(3\!-\!p\delta_p)\Lambda \mathcal{C}_{4}^4 c_*}$.


\begin{lemma}\label{LemA3.4}
Let $(\lambda_1, \lambda_2, \lambda_3)\!\in\! D_0\!\times\!\R^{-}$, $2\!<\!p\!<\!\frac{10}{3}$ and $0\!<\!c\!\leq\!c_*$. Then
$$ -\infty<m(c, {t}_{c_*}):=\inf _{u \in V^c_{{t}_{c_*}}} E(u)<0\leq\inf_{u \in \partial V^c_{{t}_{c_*}} } E(u).$$
\end{lemma}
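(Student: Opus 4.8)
The plan is to establish the three claims — $m(c,{t}_{c_*})>-\infty$, $m(c,{t}_{c_*})<0$, and $\inf_{\partial V^c_{{t}_{c_*}}}E\geq 0$ — by exploiting the pointwise bound $E(u)\geq h_c(\|\nabla u\|_2)$ from \eqref{eq2.3} together with the sign analysis of $h_c$ furnished by Lemma~\ref{lem3.1}. First I would note that ${t}_{c_*}$ is a zero of $h_{c_*}$ (namely $R_0(c_*)={t}_{c_*}=R_1(c_*)$ by Lemma~\ref{lem3.1}(ii)), and that by Lemma~\ref{lem3.1}(i) we have ${t}_{c_*}\in(R_0(c),R_1(c))$ for all $c<c_*$; in either case $h_c$ is continuous on the compact interval $[0,{t}_{c_*}]$ and hence bounded below there. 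Combined with \eqref{eq2.3}, this gives $E(u)\geq \min_{t\in[0,{t}_{c_*}]}h_c(t)>-\infty$ for every $u\in V^c_{{t}_{c_*}}$, proving the lower bound on $m(c,{t}_{c_*})$.

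For the boundary estimate, observe that if $u\in\partial V^c_{{t}_{c_*}}$ then $\|\nabla u\|_2={t}_{c_*}$, so \eqref{eq2.3} gives $E(u)\geq h_c({t}_{c_*})$. When $c=c_*$ we have $h_{c_*}({t}_{c_*})=0$ directly; when $c<c_*$, Lemma~\ref{lem3.1}(i) asserts $h_c(t)>0$ precisely on $(R_0(c),R_1(c))$, and since ${t}_{c_*}$ lies strictly inside this interval we even get $h_c({t}_{c_*})>0$. In both cases $\inf_{u\in\partial V^c_{{t}_{c_*}}}E(u)\geq h_c({t}_{c_*})\geq 0$.

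It remains to show $m(c,{t}_{c_*})<0$, which I would do by producing a test function in $V^c_{{t}_{c_*}}$ with negative energy. Fix any $w\in S_c$ and consider the dilations $w_t(x)=t^{3/2}w(tx)$, which preserve the $L^2$-norm; from \eqref{engytoinfty},
$$E(w_t)=\frac{t^2}{2}\|\nabla w\|_2^2+\frac{t^3}{2}B(w)+\frac{2\lambda_3 t^{p\delta_p}}{p}\|w\|_p^p.$$
Since $p\delta_p<2$, $B(w)<0$ and $\lambda_3<0$, all three terms have exponents smaller than... more precisely, as $t\to 0^+$ the dominant term is the one with the smallest power of $t$, namely $t^{p\delta_p}$ (because $p\delta_p<2<3$), whose coefficient $\tfrac{2\lambda_3}{p}\|w\|_p^p$ is strictly negative; hence $E(w_t)<0$ for all sufficiently small $t>0$. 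At the same time $\|\nabla w_t\|_2=t\|\nabla w\|_2\to 0<{t}_{c_*}$ as $t\to 0^+$, so for $t$ small enough $w_t\in V^c_{{t}_{c_*}}$, giving $m(c,{t}_{c_*})\leq E(w_t)<0$.

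The only delicate point is making sure the chosen dilation simultaneously satisfies $E(w_t)<0$ and $\|\nabla w_t\|_2<{t}_{c_*}$, but since both conditions hold for all small $t$ this is immediate; there is no real obstacle, the lemma being essentially a bookkeeping consequence of \eqref{eq2.3}, \eqref{engytoinfty} and Lemma~\ref{lem3.1}. (One should also record, for later use, that the strict inequality $h_c({t}_{c_*})>0$ when $c<c_*$ gives the slightly stronger fact $\inf_{\partial V^c_{{t}_{c_*}}}E>0$ in that range.)
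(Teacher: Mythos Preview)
Your proposal is correct and follows essentially the same approach as the paper: both use the scaling $u\mapsto s^{3/2}u(s\,\cdot)$ to produce a negative-energy element of $V^c_{t_{c_*}}$, the bound \eqref{eq2.3} together with continuity of $h_c$ on $[0,t_{c_*}]$ for the lower bound, and \eqref{eq2.3} evaluated at $t_{c_*}$ for the boundary estimate. The only cosmetic difference is that on the boundary the paper argues via the monotonicity $h_c(t_{c_*})\geq h_{c_*}(t_{c_*})=0$ in $c$, whereas you split into the cases $c=c_*$ and $c<c_*$ and invoke Lemma~\ref{lem3.1}(i); both routes are equivalent.
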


\begin{proof}
If $u\!\in\! V^c_{{t}_{c_*}}$, we have $u_{s}(x)\!:=\!s^{\frac{3}{2}}u(sx)\!\in\! S_c$, $||\nabla u_{s} ||_{2}\!<\! {t}_{c_*}$ and $E(u_{s})\!<\!0$ for $s \!>\!0$ sufficiently small. So $m(c, {t}_{c_*})\!<\!0$. To prove $m(c, {t}_{c_*})\!>\!-\infty$, we deduce from (\ref{eq2.3}) that
$$
E(u) \geq h_c\left(||\nabla u||_{2}\right) \geq \min _{t \in\left[0, {t}_{c_*}\right]} h_c(t)>-\infty.
$$
If $u \!\in\! \partial V^c_{{t}_{c_*}}$, we get $u \!\in\! S_c$, $||\nabla u||_{2}\!=\!{t}_{c_*}$, and hence $E(u)\!\geq\! h_{c}( {t}_{c_*})\!\geq\! h_{c_*}({t}_{c_*})\!=\!0$. \end{proof}

\section{Compactness of local minimizing sequences}
In this Section, we prove the compactness of the minimizing sequences for $m(c, {t}_{c_*})$. 

\begin{lemma}\label{LemA3.5}
Let $(\lambda_1, \lambda_2, \lambda_3)\!\in\! D_0\!\times\!\R^{-}$, $2\!<\!p\!<\!\frac{10}{3}$ and $0\!<\!c\!\leq\!c_*$. Then \\
\indent (i) The map $c\mapsto m(c, {t}_{c_*})$ is continuous on $(0,c_*]$;\\
\indent (ii) If $c_1\!\in\!(0,c)$ and $c_2\!=\!\sqrt{c^2-c^2_1}$, we have $ m(c, {t}_{c_*})\leq m(c_1, {t}_{c_*})+m(c_2, {t}_{c_*})$. If $m(c_1, {t}_{c_*})$ or $m(c_2, {t}_{c_*})$ is attained, then $ m(c, {t}_{c_*})<m(c_1, {t}_{c_*})+m(c_2, {t}_{c_*})$.
\end{lemma}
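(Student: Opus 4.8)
The plan is to first establish continuity (i), then use it together with a scaling argument to deduce the subadditivity (ii). For part (i), I would fix $c\in(0,c_*]$ and a sequence $c_n\to c$. Given $\varepsilon>0$, pick a near-optimal $u\in V^c_{t_{c_*}}$ with $E(u)<m(c,t_{c_*})+\varepsilon$ and $\|\nabla u\|_2<t_{c_*}$ strictly; rescale the mass by setting $\tilde u_n:=\frac{c_n}{c}u$, so $\tilde u_n\in S_{c_n}$, $\|\nabla\tilde u_n\|_2=\frac{c_n}{c}\|\nabla u\|_2<t_{c_*}$ for $n$ large (since $\|\nabla u\|_2<t_{c_*}$ strictly and $c_n/c\to1$), hence $\tilde u_n\in V^{c_n}_{t_{c_*}}$; then $E(\tilde u_n)\to E(u)$ by continuity of $E$ in each of its terms under multiplication by a scalar tending to $1$, giving $\limsup_n m(c_n,t_{c_*})\le m(c,t_{c_*})$. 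For the reverse inequality, take near-optimizers $u_n\in V^{c_n}_{t_{c_*}}$ of $m(c_n,t_{c_*})$ and pull them back to $S_c$ via $\frac{c}{c_n}u_n$; the key point is that near-optimizers have bounded gradient ($\|\nabla u_n\|_2<t_{c_*}$ by definition of the admissible set), so $\|\nabla(\frac{c}{c_n}u_n)\|_2<t_{c_*}$ for $n$ large and the rescaled functions remain admissible for $m(c,t_{c_*})$; combined with $E(\frac{c}{c_n}u_n)=E(u_n)+o_n(1)$ (using boundedness of $\|u_n\|_{H^1}$, which follows from $\|\nabla u_n\|_2<t_{c_*}$ and $\|u_n\|_2=c_n$ bounded), this yields $m(c,t_{c_*})\le\liminf_n m(c_n,t_{c_*})$.

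For part (ii), I would use the scaling $u\mapsto u_\sigma(x):=\sigma^{3/2}u(\sigma x)$, which preserves the $L^2$-norm, multiplies $\|\nabla u\|_2^2$ by $\sigma^2$, multiplies $B(u)$ by $\sigma^3$, and multiplies $\|u\|_p^p$ by $\sigma^{p\delta_p}$ (this is exactly the structure used in \eqref{engytoinfty}). Fix near-optimizers $u_1\in V^{c_1}_{t_{c_*}}$, $u_2\in V^{c_2}_{t_{c_*}}$ for $m(c_1,t_{c_*})$ and $m(c_2,t_{c_*})$. The standard trick is: for $\sigma\le1$ one has $E(u_{i,\sigma})\le\sigma^{p\delta_p}E(u_i)$ when $E(u_i)<0$, because $\sigma^2,\sigma^3\le\sigma^{p\delta_p}$ for $\sigma\le1$ (as $p\delta_p<2<3$) and the gradient term is positive while $B(u_i)<0$ (by Lemma \ref{2.2.}) and $\lambda_3<0$ make the other two terms negative; more precisely $E(u_{i,\sigma})=\frac{\sigma^2}{2}\|\nabla u_i\|_2^2+\frac{\sigma^3}{2}B(u_i)+\frac{2\lambda_3\sigma^{p\delta_p}}{p}\|u_i\|_p^p\le\sigma^{p\delta_p}E(u_i)$. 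Rescaling each $u_i$ by a suitable $\sigma_i<1$ keeps it in $V^{c_i}_{t_{c_*}}$, strictly decreases its gradient norm, and does not increase its energy; then I build a test function for $m(c,t_{c_*})$ by placing rescaled-and-translated copies $u_{1,\sigma_1}(\cdot)$ and $u_{2,\sigma_2}(\cdot-R e)$ far apart ($R$ large, $e$ a unit vector). The cross terms in $\|\nabla\cdot\|_2^2$, $\|\cdot\|_p^p$ and in $B(\cdot)$ all vanish in the limit $R\to\infty$ — for $B$ this uses that $K\star$ maps with a kernel decaying like $|x|^{-3}$ so the interaction energy between widely separated bumps tends to $0$ — while the sum $\|\nabla u_{1,\sigma_1}\|_2^2+\|\nabla u_{2,\sigma_2}\|_2^2<2t_{c_*}^2$ need not stay below $t_{c_*}^2$, so I must choose $\sigma_1,\sigma_2$ small enough that $\sigma_1^2\|\nabla u_1\|_2^2+\sigma_2^2\|\nabla u_2\|_2^2<t_{c_*}^2$; this is possible precisely because we may shrink while only improving energy. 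The resulting test function lies in $V^c_{t_{c_*}}$ and has energy $\le E(u_{1,\sigma_1})+E(u_{2,\sigma_2})+o_R(1)\le E(u_1)+E(u_2)+o_R(1)$, and letting the near-optimality parameters go to $0$ gives $m(c,t_{c_*})\le m(c_1,t_{c_*})+m(c_2,t_{c_*})$.

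For the strict inequality when, say, $m(c_1,t_{c_*})$ is attained by some $u_1$: here $E(u_1)<0$ strictly (by Lemma \ref{LemA3.4}) and $\|\nabla u_1\|_2<t_{c_*}$ strictly (since a minimizer cannot lie on $\partial V^{c_1}_{t_{c_*}}$ where $E\ge0$). Then for $\sigma>1$ slightly, $u_{1,\sigma}$ still lies in $V^{c_1}_{t_{c_*}}$ and $E(u_{1,\sigma})<E(u_1)=m(c_1,t_{c_*})$ — wait, rather I should scale the \emph{whole configuration}: take $u_1$ the minimizer at mass $c_1$ and for $m(c_2,t_{c_*})$ a near-optimizer $u_2$, and exploit the subadditivity relation $m(\theta c_1,t_{c_*})\le\theta^{?}m(c_1,t_{c_*})$ type inequality. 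The cleaner route: combine the dilation $u\mapsto u_\sigma$ with a mass dilation and use that $m(c,t_{c_*})<0$, together with the strict gain $E(u_{1,\sigma})<\sigma^{p\delta_p}E(u_1)$ for $\sigma<1$ \emph{strict} whenever $\|\nabla u_1\|_2>0$ (which holds), so we pick up a quantitative room $\eta>0$ in the energy estimate that survives the limit $R\to\infty$, yielding $m(c,t_{c_*})\le E(u_1)+E(u_2)-\eta<m(c_1,t_{c_*})+m(c_2,t_{c_*})$ after sending the near-optimality of $u_2$ to $0$. The main obstacle I anticipate is the bookkeeping in (ii): simultaneously (a) keeping the glued test function inside the \emph{fixed} ball $V^c_{t_{c_*}}$ — the admissible set being a gradient ball rather than the whole sphere is the nonstandard feature — and (b) controlling the dipolar cross term $\lambda_2\int(K\star\cdot)\cdot$ between the two far-apart bumps, which requires a decay estimate on $K\star\varphi$ for compactly-essentially-supported (exponentially decaying, by the regularity claim) $\varphi$; everything else is routine scaling analysis.
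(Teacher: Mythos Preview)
Your proposal has a genuine gap in part (i) and a more serious error in part (ii), both stemming from the same missing ingredient: the refined gradient bound from Lemma~\ref{lem3.1}(i), namely that any $u\in S_{c'}$ with $c'\le c_*$ and $E(u)<0$ automatically satisfies $\|\nabla u\|_2<\frac{c'}{c_*}t_{c_*}$ (not merely $<t_{c_*}$).

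In part (i), your claim that ``$\|\nabla u_n\|_2<t_{c_*}$ and $c_n/c\to 1$ imply $\|\nabla(\tfrac{c}{c_n}u_n)\|_2<t_{c_*}$ for $n$ large'' is false: the near-optimizers $u_n$ could a priori have $\|\nabla u_n\|_2\to t_{c_*}$, and when $c_n<c$ the factor $c/c_n>1$ would push the rescaled function out of the admissible ball. The paper fixes this by first requiring $E(u_n)<0$ (possible since $m(c_n,t_{c_*})<0$) and then invoking Lemma~\ref{lem3.1}(i) to get $\|\nabla u_n\|_2<\frac{c_n}{c_*}t_{c_*}$, whence $\|\nabla(\tfrac{c}{c_n}u_n)\|_2<\frac{c}{c_*}t_{c_*}\le t_{c_*}$ exactly.

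In part (ii), your approach is genuinely different from the paper's, and the key step is wrong. Your inequality $E(u_{i,\sigma})\le\sigma^{p\delta_p}E(u_i)$ for $\sigma\le1$ fails: since $B(u_i)<0$ and $\sigma^3\le\sigma^{p\delta_p}$, the term $\tfrac{\sigma^3}{2}B(u_i)$ is \emph{less negative} than $\tfrac{\sigma^{p\delta_p}}{2}B(u_i)$, so the inequality goes the wrong way on that term. In fact $E(u_{i,\sigma})\to 0^-$ as $\sigma\to 0^+$, so shrinking eventually \emph{raises} the energy above $E(u_i)<0$, and you cannot simultaneously force $\sigma_1^2\|\nabla u_1\|_2^2+\sigma_2^2\|\nabla u_2\|_2^2<t_{c_*}^2$ while preserving the energy bound. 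The gluing route can be salvaged, but only by using the refined bound above: with $E(u_i)<0$ one gets $\|\nabla u_i\|_2^2<\frac{c_i^2}{c_*^2}t_{c_*}^2$, hence $\|\nabla u_1\|_2^2+\|\nabla u_2\|_2^2<\frac{c_1^2+c_2^2}{c_*^2}t_{c_*}^2=\frac{c^2}{c_*^2}t_{c_*}^2\le t_{c_*}^2$, and no shrinking is needed at all. You would still owe a careful treatment of the dipolar cross term.

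The paper avoids gluing entirely. It uses the \emph{amplitude} scaling $v=\theta u$ with $\theta\in(1,c/c_1]$: then $v\in S_{\theta c_1}$, the refined bound gives $\|\nabla v\|_2=\theta\|\nabla u\|_2<\theta\frac{c_1}{c_*}t_{c_*}\le t_{c_*}$, and since $B(u)<0$ and $p>2$ one has $E(\theta u)<\theta^2 E(u)$ strictly. This yields $m(\theta c_1,t_{c_*})\le\theta^2 m(c_1,t_{c_*})$ (strict if $m(c_1,t_{c_*})$ is attained), and the subadditivity follows by writing $m(c,t_{c_*})=\frac{c_1^2}{c^2}m(\frac{c}{c_1}c_1,t_{c_*})+\frac{c_2^2}{c^2}m(\frac{c}{c_2}c_2,t_{c_*})$. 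This is both shorter and sidesteps the nonlocal cross-term analysis.
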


\begin{proof} (i) For any $c\in(0,c_*]$ and $\{c_n\}\subset(0,c_*]$ such that $c_n \to c$, we aim at proving $m(c_n, {t}_{c_*}) \to m(c, {t}_{c_*})$. For any $\varepsilon > 0$ sufficiently small, we deduce from $m(c_n,{t}_{c_*})<0$ that, there exists a sequence $\{u_n\}\subset V^{c_n}_{{t}_{c_*}}:=\left\{w \in S_{c_n}: ||\nabla w||_{2}<{t}_{c_*}\right\}$ such that
\begin{equation} \label{CoTq1}
E(u_n)\leq m(c_n, {t}_{c_*})+\varepsilon
~~~~\mbox{and}~~~~E(u_n)<0.
\end{equation}
Denote $v_n:=\frac{c}{c_n}u_n$ and therefore $v_n\in S_c$. We claim that $||\nabla v_n||_{2}\!<\! {t}_{c_*}$. If $c_n\geq c$, then
$$||\nabla v_n||_{2}=\frac{c}{c_n}||\nabla u_n||_{2}\leq||\nabla u_n||_{2}\!<\! {t}_{c_*}.$$
If $c_n<c\leq c_*$, then Lemma \ref{lem3.1} (i) implies that $h_{c_n}(t)\!>\!0$ for $t\!\in\![\frac{{c_n}}{c_*} {t}_{c_*}, {t}_{c_*}]$. Since $ 0>E(u_n)\geq h_{c_n}({||\nabla u_n||}_2)$,
we deduce that $||\nabla u_n||_{2}\!<\!\frac{c_n}{c_*} {t}_{c_*}$.
Then, we get 
$$||\nabla v_n||_{2}=\frac{c}{c_n}||\nabla u_n||_{2}<\frac{c}{c_n} \frac{c_n}{c_*} {t}_{c_*}\leq{t}_{c_*}, $$
and hence $\{v_n\}\subset V^c_{{t}_{c_*}}$. The facts $c_n \to c$, $\{u_n\}\subset S_{c_n}$ and $||\nabla u_n||_{2}\!<\! {t}_{c_*}$ also lead to
\begin{align*}    
  ||u_n||_{p} \leq \mathcal{C}_{p} \left\|\nabla u_n\right\|_{2}^{\delta_p} \left\|u_n\right\|_{2}^{(1-\delta_p)} \leq C,~~~~~~~~|B(u_n)| \!\le\! \Lambda\|u_n\|_4^4\leq C
\end{align*}
for some $C>0$, which is independent of $n$. So, we get
\begin{equation} \label{CoTq2}
E(v_n)=\frac{1}{2}\frac{c^2}{c^2_n}{||\nabla u_n||}_2^2+\frac{1}{2} \frac{c^4}{c^4_n}B(u_n)-\frac{2|\lambda_{3}|}{p}\frac{c^p}{c^p_n}\|u_n\|_{p}^{p}
=E(u_n)+o_n(1).
\end{equation}
By using \eqref{CoTq1}-\eqref{CoTq2}, $\{v_n\}\subset V^{c }_{{t}_{c_*}}:=\left\{w \in S_{c}: ||\nabla w||_{2}<{t}_{c_*}\right\}$, we have
$$
 m(c, {t}_{c_*})\leq E(v_n)=E(u_n)+o_n(1) \leq m(c_n, {t}_{c_*})+\varepsilon+o_n(1).
$$
On the other hand, let $u \in V^{c}_{{t}_{c_*}}:=\left\{w \in S_{c}: ||\nabla w||_{2}<{t}_{c_*}\right\}$ be such that
\begin{equation} \label{CoTq3}
E(u)\leq m(c, {t}_{c_*})+\varepsilon
~~~~\mbox{and}~~~~E(u)<0.
\end{equation}
Set $w_n\!:=\!\frac{c_n}{c}u$ and then $w_n\!\in\! S_{c_n}$. We also have $||\nabla w_n||_{2}\!<\!{t}_{c_*}$ by using Lemma \ref{lem3.1}. In a fashion similar to \eqref{CoTq2}, we have $E(w_n)=E(u)+o_n(1)$. Then, \eqref{CoTq3} implies that
$$ m(c_n, {t}_{c_*})\leq E(w_n)=E(u)+o_n(1)\leq m(c, {t}_{c_*})+\varepsilon+o_n(1). $$
Letting $\varepsilon\to 0$ and then $n\to+\infty$, we have $ m(c_n, {t}_{c_*})\to m(c, {t}_{c_*})$.

(ii) For any fixed $c_1\in(0,c)$, we claim that
\begin{equation} \label{CoTq4}
 m(\theta c_1, {t}_{c_*})\leq \theta^2 m(c_1, {t}_{c_*}),~~~~\forall \theta \in (1,\frac{c}{c_1}]
\end{equation}
and that, if $m(c_1, {t}_{c_*})$ is attained, the inequality is strict. In fact, \eqref{CoTq4} leads to
\begin{align*}
 m(c, {t}_{c_*})\!=\!\frac{c^2_1}{c^2}m\Big(\frac{c}{c_1}c_1, {t}_{c_*}\Big) \!+\!\frac{c_2^2}{c^2}  m\Big(\frac{c}{c_2}c_2, {t}_{c_*}\Big)
\!\leq \!m(c_1, {t}_{c_*})\!+\!m(c_2, {t}_{c_*}),
\end{align*}
with a strict inequality if $m(c_1, {t}_{c_*})$ is attained. For any $\varepsilon > 0$ sufficiently small, there exists $u\in V^{c_1 }_{{t}_{c_*}}:=\left\{w \in S_{c_1 }: ||\nabla w||_{2}<{t}_{c_*}\right\}$ such that
\begin{equation} \label{CoTq5}
E(u)\leq m(c_1, {t}_{c_*})+\varepsilon
~~~~\mbox{and}~~~~E(u)<0.
\end{equation}
Similar to (i), Lemma \ref{lem3.1} (i) implies that $||\nabla u||_{2}\!<\!\frac{c_1}{c_*} {t}_{c_*}$. Set $v=\theta u$, then we have $$||v||_{2}=\theta||u||_{2}=\theta c_1,~~~~~~~~||\nabla v||_{2}\!=\!\theta||\nabla u||_{2}\!<\!\theta\frac{c_1}{c_*} {t}_{c_*}\!
\leq\! {t}_{c_*},$$
and therefore $v\in V^{\theta c_1 }_{{t}_{c_*}}:=\left\{w \in S_{\theta c_1 }: ||\nabla w||_{2}<{t}_{c_*}\right\}$. Finally, we deduce that
\begin{align*}
m(\theta c_1, {t}_{c_*})&\leq E(v)=\frac{\theta^2}{2} {||\nabla u||}_2^2+\frac{\theta^4}{2}B(u)-\frac{2|\lambda_{3}| \theta^p}{p}\|u\|_{p}^{p} \\
&<\frac{\theta^2}{2} {||\nabla u||}_2^2+\frac{\theta^2}{2}B(u)-\frac{2|\lambda_{3}| \theta^2}{p}\|u\|_{p}^{p}=\theta^2E(u) \leq \theta^2 m(c_1, {t}_{c_*})+\theta^2 \varepsilon.
\end{align*}
Since $\varepsilon>0$ is arbitrary, we have that $m(\theta c_1, {t}_{c_*})\leq\theta^2 m(c_1, {t}_{c_*})$. If $m(c_1, {t}_{c_*})$ is attained, then we can let $\varepsilon=0$ in \eqref{CoTq5} and thus the strict inequality follows.
\end{proof}

Applying Lemma \ref{LemA3.5}, we prove the compactness of the minimizing sequences for $m(c, {t}_{c_*})$.

\begin{lemma} \label{LeMa3.4.1}
Let $(\lambda_1, \lambda_2, \lambda_3)\!\in\! D_0\!\times\!\R^{-}$, $2\!<\!p\!<\!\frac{10}{3}$ and $0\!<\!c\!\leq\!c_*$. Then any sequence $\{u_{n}\} \subset H^1(\R^3)$ such that
$$\left\|u_{n}\right\|_{2} \to c,~~~~~~~~||\nabla u_n||_{2}< {t}_{c_*},~~~~~~~~E\left(u_{n}\right) \to m(c, {t}_{c_*})<0$$
is relatively compact in $H^1(\R^3)$ up to translations, and hence $m(c, {t}_{c_*})$ is attained.  
\end{lemma}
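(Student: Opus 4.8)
The plan is to run the concentration–compactness method on a minimizing sequence, exploiting the strict subadditivity established in Lemma~\ref{LemA3.5}(ii) to rule out dichotomy and vanishing. Let $\{u_n\}$ be as in the statement. From \eqref{eq2.3} and $\|\nabla u_n\|_2<t_{c_*}$ one sees immediately that $\{u_n\}$ is bounded in $H^1(\R^3)$; indeed $\|\nabla u_n\|_2$ is bounded by hypothesis and $\|u_n\|_2\to c$. Passing to a subsequence we may assume $u_n\rightharpoonup u$ weakly in $H^1(\R^3)$, in $L^q_{loc}$ for $q\in[1,6)$, and a.e. The heart of the argument is to show that, after suitable translations $x\mapsto x+z_n$, the weak limit is nontrivial and in fact $\|u\|_2=c$, which upgrades weak to strong convergence via the energy.

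First I would set up the Lévy concentration function $Q_n(r):=\sup_{y\in\R^3}\int_{B_r(y)}|u_n|^2\,dx$ and extract the usual trichotomy parameter $\alpha:=\lim_{r\to\infty}\lim_{n\to\infty}Q_n(r)\in[0,c^2]$. \textbf{Vanishing} ($\alpha=0$) is excluded by Lions' lemma: if $\alpha=0$ then $u_n\to 0$ in $L^p(\R^3)$ for every $p\in(2,6)$, so in particular $\|u_n\|_p\to 0$ and, by Lemma~\ref{2.2.}, $B(u_n)\to 0$ as well (using $|B(u_n)|\le\Lambda\|u_n\|_4^4$ and $4\in(2,6)$); then $E(u_n)=\tfrac12\|\nabla u_n\|_2^2+o_n(1)\ge o_n(1)$, contradicting $E(u_n)\to m(c,t_{c_*})<0$. \textbf{Dichotomy} ($0<\alpha<c^2$) is excluded by the standard splitting: one produces $u_n=u_n^{(1)}+u_n^{(2)}+r_n$ with $\|u_n^{(1)}\|_2^2\to\alpha$, $\|u_n^{(2)}\|_2^2\to c^2-\alpha$, the gradient norms of the two pieces staying (asymptotically) below $t_{c_*}$ so that both lie in the admissible region, and the cross terms vanishing so that $E(u_n)\ge E(u_n^{(1)})+E(u_n^{(2)})+o_n(1)\ge m(c_1,t_{c_*})+m(c_2,t_{c_*})+o_n(1)$ with $c_1=\sqrt\alpha$, $c_2=\sqrt{c^2-\alpha}$; combined with $m(c,t_{c_*})\le m(c_1,t_{c_*})+m(c_2,t_{c_*})$ from Lemma~\ref{LemA3.5}(ii), equality would force both subproblems to be attained by the weak limits of the pieces, but then Lemma~\ref{LemA3.5}(ii) gives the \emph{strict} inequality $m(c,t_{c_*})<m(c_1,t_{c_*})+m(c_2,t_{c_*})$, a contradiction. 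Here one must be slightly careful that the localized pieces indeed satisfy the gradient constraint $\|\nabla u_n^{(i)}\|_2<t_{c_*}$ and have $L^2$-norms in $(0,c_*]$; the lower bound $\|\nabla u_n\|_2<\tfrac{c}{c_*}t_{c_*}$ coming from $E(u_n)<0$ together with Lemma~\ref{lem3.1}(i) (cf.\ \eqref{Outline2}) and the continuity of $m(\cdot,t_{c_*})$ from Lemma~\ref{LemA3.5}(i) provide the needed room.

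Once $\alpha=c^2$, compactness is nearly immediate: there exist $z_n\in\R^3$ with $\int_{B_r(z_n)}|u_n|^2\ge c^2-\varepsilon$ for large $n$ and $r$, so the translated sequence $\tilde u_n(\cdot):=u_n(\cdot+z_n)$ has a weak limit $u$ with $\|u\|_2=c$, hence $\tilde u_n\to u$ strongly in $L^2(\R^3)$ and, by boundedness and interpolation, in $L^p(\R^3)$ for all $p\in[2,6)$; in particular $\|\tilde u_n\|_p^p\to\|u\|_p^p$ and $B(\tilde u_n)\to B(u)$. Since $u\in\overline{V^c_{t_{c_*}}}$ and $E(u)<0$ forces $\|\nabla u\|_2<R_0(c)\le t_{c_*}$ (by \eqref{Outline2}), $u$ is admissible, so $E(u)\ge m(c,t_{c_*})$. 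On the other hand weak lower semicontinuity of the gradient term gives $m(c,t_{c_*})=\lim E(\tilde u_n)\ge \tfrac12\|\nabla u\|_2^2+\tfrac12 B(u)+\tfrac{2\lambda_3}{p}\|u\|_p^p=E(u)\ge m(c,t_{c_*})$, whence equality throughout; in particular $\|\nabla\tilde u_n\|_2\to\|\nabla u\|_2$, so $\tilde u_n\to u$ strongly in $H^1(\R^3)$ and $m(c,t_{c_*})=E(u)$ is attained. The main obstacle I anticipate is the dichotomy step: because the admissible set is the \emph{open ball} $\{\|\nabla u\|_2<t_{c_*}\}$ rather than all of $S_c$, one has to verify carefully that the two localized pieces remain strictly inside this ball (using the $E(u_n)<0$ bound and Lemma~\ref{lem3.1}) before one is entitled to compare with $m(c_i,t_{c_*})$, and one must track the $o_n(1)$ errors in the $L^2$-masses when invoking the continuity in Lemma~\ref{LemA3.5}(i).
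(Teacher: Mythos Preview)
Your proposal is correct and rests on the same two pillars as the paper's proof: exclusion of vanishing via Lions' lemma together with $m(c,t_{c_*})<0$, and exclusion of mass loss via the (strict) subadditivity of Lemma~\ref{LemA3.5}. The difference is in implementation. You run the classical L\'evy trichotomy and, in the dichotomy case, cut $u_n$ into two far-apart pieces $u_n^{(1)},u_n^{(2)}$; as you rightly anticipate, the delicate point is then to keep both pieces inside the open ball $\{\|\nabla\cdot\|_2<t_{c_*}\}$ despite the cutoff, which requires the extra margin $\|\nabla u_n\|_2<\tfrac{c}{c_*}t_{c_*}$ from Lemma~\ref{lem3.1}. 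The paper bypasses this difficulty by using a single translation plus Br\'ezis--Lieb: after excluding vanishing one translates so that $u_n(\cdot+y_n)\rightharpoonup u_c\neq 0$, sets $v_n:=u_n(\cdot+y_n)-u_c$, and uses the splittings \eqref{NlaG1}--\eqref{Bs4} to get $E(u_n)=E(v_n)+E(u_c)+o_n(1)$. The point is that weak lower semicontinuity gives $\|\nabla v_n\|_2,\|\nabla u_c\|_2\le\|\nabla u_n\|_2<t_{c_*}$ for free, so both $v_n$ and $u_c$ are automatically admissible; one then derives $m(c,t_{c_*})\ge m(c_2,t_{c_*})+E(u_c)$ with $c_2=\sqrt{c^2-\|u_c\|_2^2}$ via the continuity in Lemma~\ref{LemA3.5}(i), and the strict subadditivity forces $\|u_c\|_2=c$. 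Your route works but carries more bookkeeping; the paper's weak-limit/Br\'ezis--Lieb variant is shorter precisely because it sidesteps the cutoff issue you flagged.
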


\begin{proof}
It is obvious that $\{u_{n}\}$ is bounded in $H^1(\R^3)$. If $\mathop {\lim }\limits_{n  \to \infty} \sup _{y \in \mathbb{R}^{3}} \int_{B_{R}(y)}\left|u_{n}(x)\right|^{2}dx\!=\!0$ for any $R\!>\!0$, we can prove that $\left\|u_{n}\right\|_{r} \to 0$ since $2<r<6$ (see Lemma I.1 in \cite{LilP}). Then we have $|B(u_n)| \!\le\! \Lambda\|u_n\|_4^4\to0$ and ${||u_n||}_p^p\to0$. But this leads to a contradiction that
\begin{align*}
0&>\mathop {\lim }\limits_{n  \to \infty}E_{\mu}(u_n)=\frac{1}{2}\mathop {\lim }\limits_{n  \to \infty}{||\nabla u_n||}_2^2+\frac{1}{2}\mathop {\lim }\limits_{n  \to \infty}B(u_n)-\frac{2|\lambda_{3}|}{p}\mathop {\lim }\limits_{n  \to \infty}{||u_n||}_p^p \geq0.
\end{align*}
Then, there exist an $\varepsilon_0>0$ and a sequence $\{y_{n}\} \subset \mathbb{R}^{3}$ such that
$$
\int_{B_{R}(y_n)}\left|u_{n}(x)\right|^{2} d x \geq \varepsilon_0>0
$$
for some $R>0$. Hence we have $u_{n}(x+y_n)\rightharpoonup u_c\not\equiv0$ in $H^1(\R^3)$ for some $u_c\in H^1(\R^3)$. Let $v_n(x):=u_{n}(x+y_n)-u_c$, then we have $v_{n}\rightharpoonup 0$ in $H^1(\R^3)$. Therefore, we get
\begin{align} \label{NlaG1} \left\|u_{n}\right\|_{2}^2=\left\|u_{n}(\cdot+y_n)\right\|_{2}^2=\left\|v_{n}\right\|_{2}^2
+\left\|u_c\right\|_{2}^2+o_n(1),  \nonumber \\
\left\|\nabla u_{n}\right\|_{2}^2=\left\|\nabla u_{n}(\cdot+y_n)\right\|_{2}^2=\left\|\nabla v_{n}\right\|_{2}^2
+\left\|\nabla u_c\right\|_{2}^2+o_n(1).
\end{align}
By the Br\'{e}zis-Lieb lemma \cite{a7}, we have
\begin{align} \label{NlaG2} \left\|u_{n}\right\|_{p}^p=\left\|u_{n}(\cdot+y_n)\right\|_{p}^p=\left\|v_{n}\right\|_{p}^p
+\left\|u_c\right\|_{p}^p+o_n(1).
\end{align}
From the proof of Proposition 3.2 in \cite{as}, it holds that
\begin{align} \label{Bs4}
B(u_n)=B(v_n)+B(u_c).
\end{align}
We now prove that
$$u_{n}(x+y_n)\rightarrow u_c\not\equiv0~~~~\mbox{in}~~~~L^2(\R^N),$$
or equivalently $v_{n}\!\rightarrow \!0$ in $L^2(\R^N)$. Denote $\left\|u_c\right\|_{2}\!=\!c_1$. If $c_1\!=\!c$, the proof is completed. If $c_1\!<\!c$, we learn from \eqref{NlaG1} that $\left\|v_{n}\right\|_{2}\!\leq\! c$ and $\left\|\nabla v_{n}\right\|_{2} \!\leq\! \left\|\nabla u_{n}\right\|_{2} \!<\! {t}_{c_*}$ for $n$ sufficiently large. Also, \eqref{NlaG1}-\eqref{Bs4} imply that
$$E\left(u_{n}\right)=E\big(u_{n}(\cdot+y_n)\big)
=E\left(v_{n} \right)+E\left(u_{c} \right)+o_n(1).$$
Since $E\left(u_{n}\right)\to m(c,{t}_{c_*})$ and $v_{n} \!\in\! V^{\left\| v_{n}\right\|_{2}}_{{t}_{c_*}}\!:=\!\left\{w \!\in\! S_{\left\| v_{n}\right\|_{2}}: ||\nabla w||_{2}\!<\!{t}_{c_*}\right\}$, we obtain
\begin{align} \label{NlaG3}
m(c,{t}_{c_*})=E\left(v_{n} \right)+E\left(u_{c} \right)+o_n(1)\geq m(\left\| v_{n}\right\|_{2}, {t}_{c_*})+E\left(u_{c} \right)+o_n(1).
\end{align}
By the continuity of $c\mapsto m(c,{t}_{c_*})$ (see Lemma \ref{LemA3.5} (i)), we can reduce \eqref{NlaG3} to
\begin{align} \label{NlaG4}
m(c,{t}_{c_*})\geq m(c_2,{t}_{c_*})+E\left(u_{c}\right),
\end{align}
where $c_2\!=\!\sqrt{c^2-c^2_1}\!>\!0$. The facts  $\left\|u_{c}\right\|_{2}\!=\!c_1$ and $\left\|\nabla u_{c}\right\|_{2} \!\leq\! \left\|\nabla u_{n}\right\|_{2} \!<\! {t}_{c_*}$ indicate that $E\left(u_{c} \right)\!\geq\! m(c_1, {t}_{c_*})$. If $E\left(u_{c} \right)\!>\! m(c_1, {t}_{c_*})$, \eqref{NlaG4} and Lemma \ref{LemA3.5} give a contradiction that
\begin{align*}
m(c, {t}_{c_*})\geq m(c_2, {t}_{c_*})+E\left(u_{c} \right)> m(c_2, {t}_{c_*})+m(c_1, {t}_{c_*})\geq m(c, {t}_{c_*}).
\end{align*}
If $E\left(u_{c} \right)=m(c_1, {t}_{c_*})$, that is, $m(c_1, {t}_{c_*})$ is attained by $u_{c}$, then Lemma \ref{LemA3.5} (ii) gives $m(c_2, {t}_{c_*})+m(c_1, {t}_{c_*})> m(c, {t}_{c_*})$ and hence we obtain a contradiction that
\begin{align*}
m(c, {t}_{c_*})\geq m(c_2, {t}_{c_*})+E\left(u_{c} \right)=m(c_2, {t}_{c_*})+m(c_1, {t}_{c_*})>m(c, {t}_{c_*}).
\end{align*}
Therefore, we have $\left\|u_c\right\|_{2}\!=\!c$ and hence $v_{n}\!\rightarrow \!0$ in $L^2(\R^N)$. It results that
$$||v_n||_{p} \leq \mathcal{C}_{p} \left\|\nabla v_n\right\|_{2}^{\delta_p} \left\|v_n\right\|_{2}^{(1-\delta_p)} \to0,~~~~~~~~|B(v_n)| \!\le\! \Lambda\|v_n\|_4^4\to0.$$
Finally, we get 
\begin{align*}
m(c,{t}_{c_*})=E\left(v_{n} \right)+E\left(u_{c} \right)+o_n(1)\geq\frac{1}{2}{||\nabla v_{n}||}_2^2+m(c,{t}_{c_*})+o_n(1),
\end{align*}
which indicates ${||\nabla v_{n}||}_2\leq o_n(1)$. So we have $v_{n}\!\rightarrow \!0$ in $H^1(\R^3)$.
\end{proof}

\section{Refined upper bound of the energy}
In this Section, we give a refined upper bound of $m(c,R_0)$. Let $c\!>\!0$, $\lambda _3\!<\!0$ and $2\!<\!p\!<\!\frac{10}{3}$ be fixed and search for $(\beta_c, v_c)\in \mathbb{R}\times H^1(\mathbb{R}^3)$, with $\beta_c>0$, solving
\begin{equation}\label{revisedI}
\left\{ \begin{gathered}
-\frac{1}{2} \Delta v+\beta_c v=|{\lambda _3}|{| v |^{p-2}}v~~~~{\text{ in }}{\mathbb{R}^3}  \\
v>0,~~~~v(0)=\max v,~~~~\int_{{\mathbb{R}^3}} {{v}^2}=c^2.
\end{gathered}  \right.
\end{equation}
Denote $I(v)\!:=\!\frac{1}{2}\|\nabla v\|_{2}^{2}\!-\!\frac{2|\lambda_{3}|}{p}\|v\|_{p}^{p}$, then solutions $v_c$ of \eqref{revisedI} can be found as minimizers of
$$ m_0(c)=\inf_{v \in S_c} I(v)>-\infty,$$
with $\beta_c$ appearing as Lagrange multipliers. Similar to the proof of Theorem 1.1 in \cite{NSoa}, we can prove that $m_0(c)$ is attained by some $v_c\in H^1(\mathbb{R}^3)$ by using the concentration-compactness principle. Applying the rearrangement technique (see \cite{ELMA}) and the strong maximum principle, we can assume that $v_c\in H_{rad}^1(\mathbb{R}^3)$, $v_c(0)=\max v_c$ and $v_c>0$. 

From \cite{Kwon,Wein}, we know that $W_p$ is the unique positive radial  solution of
$$ -\Delta W\!+\!(\frac{1}{\delta_p}-\!1)W \!=\!\frac{2}{p\delta_p}|W|^{p-2}W,~~~~\mbox{where}~~~~2\!<\!p\!<\!6. $$

\begin{lemma} \label{LeMa4.4.1}
Let $c\!>\!0$, $\lambda_{3}\!<\!0$ and $2\!<\!p\!<\!\frac{10}{3}$. Then \eqref{revisedI} has a unique positive solution $(\beta_c, v_c)$ given by
$$\beta_c:=(1-\delta_p)\Big[2\delta_p  \Big]^{\frac{p\delta_p}{2-p\delta_p}}
\Big[\mathcal{C}_{p}^{p}|\lambda_{3}|  \Big]^{\frac{2}{2-p\delta_p}}
c^{\frac{2(p-2)}{2-p\delta_p}},~~~~~~~~v_c(x):=\Big[  \frac{2\beta_c}{p(1-\delta_p)|\lambda_{3}|}\Big]^{\frac{1}{p-2}}W_p(\sqrt{\frac{2\delta_p\beta_c}{1-\delta_p}} x),$$
where $\delta_p\!=\!\frac{3(p-2)}{2p}$ and $\mathcal{C}_{p}\!=\!\Big( \frac{p}{2 ||W_p||^{p-2}_{2}} \Big)^{\frac{1}{p}}$. Furthermore, we have
$$m_0(c)=I(v_c)=-\kappa_{p,\lambda_{3}} c^{\frac{6-p}{2-p\delta_p}},$$
where $\kappa_{p,\lambda_{3}}\!:=\!\frac{10-3p}{6(p-2)}
\Big[ 2\delta_p \mathcal{C}_{p}^p|\lambda_{3}| \Big]
^{\frac{2}{2-p\delta_p}}\!>\!0$.
\end{lemma}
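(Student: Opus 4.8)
The existence statement is in fact already at hand: as recalled above, $m_0(c)=\inf_{S_c}I$ is attained, and by Schwarz rearrangement together with the strong maximum principle one may take the minimizer to be a positive radial function $v_c\in H^1_{rad}(\R^3)$ with $v_c(0)=\max v_c$, solving the first line of \eqref{revisedI} with some Lagrange multiplier $\beta_c\in\R$. Since $p<\tfrac{10}{3}$ makes the problem $L^2$-subcritical, the dilation $u_t=t^{3/2}u(t\,\cdot)$ with $t$ small shows $m_0(c)<0$; testing the equation against $v_c$ yields the Nehari identity $\beta_c c^2=|\lambda_3|\|v_c\|_p^p-\tfrac12\|\nabla v_c\|_2^2$, and combined with $m_0(c)<0$ and $p>2$ this forces $\beta_c>0$. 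So the task reduces to (i) identifying the explicit form, (ii) uniqueness, and (iii) computing $I(v_c)$.

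For (i) and (ii) the plan is a scaling reduction to $W_p$. Insert the ansatz $v(x)=A\,W_p(Bx)$ with $A,B>0$ into $-\tfrac12\Delta v+\beta v=|\lambda_3|v^{p-1}$ and use $-\Delta W_p+(\tfrac1{\delta_p}-1)W_p=\tfrac2{p\delta_p}W_p^{p-1}$ to substitute for $\Delta W_p$; matching the two independent nonlinear channels $W_p(B\,\cdot)$ and $W_p(B\,\cdot)^{p-1}$ forces $B^2=\tfrac{2\delta_p\beta}{1-\delta_p}$ and $A^{p-2}=\tfrac{2\beta}{p(1-\delta_p)|\lambda_3|}$, i.e. exactly the $v_c$ in the statement, now viewed as a one–parameter family $\{v_\beta\}_{\beta>0}$. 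The mass constraint becomes $\|v_\beta\|_2^2=A^2B^{-3}\|W_p\|_2^2=c^2$; using $\mathcal C_p^p=\tfrac{p}{2\|W_p\|_2^{p-2}}$ this is a single scalar equation for $\beta$ of the form $\beta^{(2-p\delta_p)/(p-2)}=\mathrm{const}\cdot c^2$, and since $p<\tfrac{10}{3}$ gives $2-p\delta_p>0$ and $p-2>0$, it has a unique root $\beta=\beta_c$, equal to the displayed expression. Uniqueness within \eqref{revisedI} then follows because any positive solution of the PDE that decays at infinity is, by the classical symmetry results for positive solutions and the ODE uniqueness theorem of Kwong (\cite{Kwon,Wein}), a dilation $A\,W_p(B\,\cdot)$ of $W_p$, hence lies in $\{v_\beta\}$, on which the mass is strictly monotone in $\beta$.

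For (iii), combine the Nehari identity above with the Pohozaev identity for $-\tfrac12\Delta v_c+\beta_c v_c=|\lambda_3|v_c^{p-1}$ in $\R^3$; this $2\times 2$ linear system expresses $\|\nabla v_c\|_2^2$ and $\|v_c\|_p^p$ as explicit positive multiples of $\beta_c c^2$, in particular $\|\nabla v_c\|_2^2=\tfrac{2\delta_p}{1-\delta_p}\beta_c c^2$ and $|\lambda_3|\|v_c\|_p^p=\tfrac{1}{1-\delta_p}\beta_c c^2$. Substituting into $I(v_c)=\tfrac12\|\nabla v_c\|_2^2-\tfrac{2|\lambda_3|}{p}\|v_c\|_p^p$ collapses to $I(v_c)=\tfrac{p\delta_p-2}{p(1-\delta_p)}\beta_c c^2$ (negative since $p\delta_p<2$); inserting the formula for $\beta_c$ and using $6-p=2p(1-\delta_p)$ to reorganize the powers then yields precisely $I(v_c)=-\kappa_{p,\lambda_3}c^{(6-p)/(2-p\delta_p)}$ with $\kappa_{p,\lambda_3}$ as in \eqref{cost2.11}. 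Since $v_c$ realizes the infimum, $m_0(c)=I(v_c)$.

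The genuinely delicate points are two. First, the exponent bookkeeping in steps (i) and (iii): one must track the powers of $\beta$, $\delta_p$, $\mathcal C_p$, $|\lambda_3|$ and $c$ carefully so that they reassemble into the compact constants $(1-\delta_p)[2\delta_p]^{p\delta_p/(2-p\delta_p)}[\mathcal C_p^p|\lambda_3|]^{2/(2-p\delta_p)}$ and $\kappa_{p,\lambda_3}$ — routine, but error-prone. Second, and more conceptual, is the reduction of an \emph{arbitrary} positive solution of \eqref{revisedI} to a dilation of $W_p$: here one invokes the radial symmetry of positive decaying solutions together with Kwong's uniqueness result, after which strict monotonicity of $\beta\mapsto\|v_\beta\|_2$ — a consequence of $p<\tfrac{10}{3}$ — closes the uniqueness argument. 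Everything else is algebra.
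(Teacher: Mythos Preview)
Your proposal is correct and follows the same approach as the paper: the paper's own proof is essentially a two-line pointer (any positive radial minimizer must be a scaling of $W_p$, cf.\ \cite{TblO,Maed}), and you have simply unpacked that pointer into the explicit scaling ansatz, the mass/Nehari/Pohozaev bookkeeping, and the Kwong uniqueness step. There is no methodological difference---your write-up is what the paper's omitted details would look like if written out.
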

\begin{proof}
Since minimizers of $m_0(c)$ is positive and radially symmetric, and solving
$-\frac{1}{2} \Delta v+\beta_c v=(-{\lambda _3}){| v |^{p-2}}v$ for some $\beta_c>0$, it must be a scaling of $W_p$. The proof is similar to that of Proposition 2.2 in \cite{TblO} or Lemma 4.1 in \cite{Maed}, so we omit the details.
\end{proof}

\begin{lemma} \label{LeMA4.4.2}
Let $(\lambda_1, \lambda_2, \lambda_3)\!\in\! D_0\!\times\!\R^{-}$, $2\!<\!p\!<\!\frac{10}{3}$ and $0\!<\!c\!\leq\!c_*$. Then
$$m(c,R_0)<-\kappa_{p,\lambda_{3}} c^{\frac{6-p}{2-p\delta_p}}, $$
where $c_*$ is given in \eqref{cost2.11} and $\kappa_{p,\lambda_{3}}\!:=\!\frac{10-3p}{6(p-2)}
\Big[ 2\delta_p \mathcal{C}_{p}^p|\lambda_{3}| \Big]
^{\frac{2}{2-p\delta_p}}\!>\!0$.
\end{lemma}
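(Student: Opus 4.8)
The plan is to use the minimizer $v_c$ of $m_0(c)$ from Lemma \ref{LeMa4.4.1} — or rather a suitable truncation of it — as a test function for $m(c,R_0)$. Recall that by Lemma \ref{LeMa4.4.1} we have $I(v_c)=-\kappa_{p,\lambda_{3}}c^{\frac{6-p}{2-p\delta_p}}$, and since $I(v)=E(v)-\frac{\lambda_1}{2}\|v\|_4^4-\frac{\lambda_2}{2}\int_{\R^3}(K\star|v|^2)|v|^2=E(v)-\frac12 B(v)$, while Lemma \ref{2.2.} tells us $B(v)<0$ whenever $v\in S_c$ and $(\lambda_1,\lambda_2)\in D_0$, one gets $E(v_c)=I(v_c)+\frac12 B(v_c)<I(v_c)=-\kappa_{p,\lambda_{3}}c^{\frac{6-p}{2-p\delta_p}}$. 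So the strict inequality comes for free from the sign of the dipolar-plus-contact term $B$; the only real work is to certify that the test function lies in the admissible set $V^c_{R_0}=\{u\in S_c:\|\nabla u\|_2<R_0\}$, i.e. that $\|\nabla v_c\|_2<R_0(c)$.

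First I would compute $\|\nabla v_c\|_2^2$ explicitly from the scaling formula $v_c(x)=\big[\tfrac{2\beta_c}{p(1-\delta_p)|\lambda_3|}\big]^{\frac1{p-2}}W_p(\sqrt{\tfrac{2\delta_p\beta_c}{1-\delta_p}}\,x)$ together with the value of $\beta_c$ in Lemma \ref{LeMa4.4.1}, obtaining a constant multiple of $c^{\frac{6-p}{2-p\delta_p}}$ — concretely, using the Pohozaev/Nehari identities for $W_p$ one finds $\|\nabla v_c\|_2^2 = \tfrac{2p\delta_p}{2-p\delta_p}\kappa_{p,\lambda_3}\,c^{\frac{6-p}{2-p\delta_p}}$ (this matches the limiting value announced in Theorem \ref{th1.3}). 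Then I would compare this against $\bar t_c^{\,2}=\big[\tfrac{4|\lambda_3|\mathcal{C}_p^p c^{p(1-\delta_p)}}{p}\big]^{\frac{2}{2-p\delta_p}}$, since by Lemma \ref{lem3.1}(i) we know $\bar t_c<R_0(c)$ for $0<c\le c_*$; it therefore suffices to show $\|\nabla v_c\|_2<\bar t_c$. Both quantities scale like $c^{\frac{6-p}{2-p\delta_p}}$ in $c$, so the inequality reduces to a clean dimensionless comparison of explicit constants built out of $p$, $\delta_p$ and $\mathcal{C}_p^p$ (the $|\lambda_3|$-dependence cancels), which should hold for all $2<p<\tfrac{10}{3}$ after elementary manipulation.

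An alternative, in case the direct comparison $\|\nabla v_c\|_2<\bar t_c$ turns out to be false or delicate, is to rescale: for $t>0$ set $(v_c)_t(x)=t^{3/2}v_c(tx)\in S_c$, choose $t$ slightly less than $1$ so that $\|\nabla(v_c)_t\|_2=t\|\nabla v_c\|_2<R_0(c)$ while keeping $E((v_c)_t)$ still strictly below $-\kappa_{p,\lambda_3}c^{\frac{6-p}{2-p\delta_p}}$; the energy $E((v_c)_t)=\tfrac{t^2}{2}\|\nabla v_c\|_2^2+\tfrac{t^3}{2}B(v_c)-\tfrac{2|\lambda_3|t^{p\delta_p}}{p}\|v_c\|_p^p$ is continuous in $t$ and its value at $t=1$ is $E(v_c)<-\kappa_{p,\lambda_3}c^{\frac{6-p}{2-p\delta_p}}$, so there is an open interval of $t<1$ for which both requirements are met. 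One should note that $I((v_c)_t)$ need not equal $m_0(c)$, but since $v_c$ is the minimizer of $I$ on $S_c$ we have $I((v_c)_t)\ge m_0(c)$ is the wrong direction — so in this variant one must keep $t$ close enough to $1$ to preserve the strict energy bound rather than relying on the minimization property, which is exactly why continuity in $t$ is used.

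The main obstacle I anticipate is the bookkeeping in the admissibility check: one must track the precise constants in $\|\nabla v_c\|_2$ (which requires knowing $\|\nabla W_p\|_2^2$, $\|W_p\|_p^p$ and $\|W_p\|_2^2$ in terms of each other via the equation $-\Delta W_p+(\tfrac1{\delta_p}-1)W_p=\tfrac2{p\delta_p}|W_p|^{p-2}W_p$ and its Pohozaev identity) and verify the resulting elementary inequality in $p$ holds on the whole range $(2,\tfrac{10}{3})$. The conceptual content — that $B(v_c)<0$ forces the strict drop below $-\kappa_{p,\lambda_3}c^{\frac{6-p}{2-p\delta_p}}$ — is immediate from Lemma \ref{2.2.}; everything else is careful but routine scaling algebra, and the lower bound $\bar t_c<R_0(c)$ from Lemma \ref{lem3.1}(i) is precisely the tool that makes the admissibility verification tractable without having to solve for $R_0(c)$ explicitly.
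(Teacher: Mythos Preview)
Your proposal is correct and follows essentially the same route as the paper: use $v_c$ from Lemma~\ref{LeMa4.4.1} as a test function, invoke $B(v_c)<0$ (Lemma~\ref{2.2.}) to get $E(v_c)<I(v_c)=-\kappa_{p,\lambda_3}c^{\frac{6-p}{2-p\delta_p}}$, and verify admissibility via $\|\nabla v_c\|_2<\bar t_c<R_0(c)$. The one refinement worth noting is that the paper obtains $\|\nabla v_c\|_2<\bar t_c$ without any explicit computation: since $I(v)\ge g_c(\|\nabla v\|_2)$ by Gagliardo--Nirenberg and $I(v_c)=m_0(c)<0$, one has $g_c(\|\nabla v_c\|_2)<0$, forcing $\|\nabla v_c\|_2<\bar t_c$ because $g_c>0$ on $(\bar t_c,\infty)$; this sidesteps the Pohozaev bookkeeping you anticipated (though your direct comparison does go through, reducing exactly to $p\delta_p<2$), and makes your rescaling alternative unnecessary.
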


\begin{proof}
We deduce from Lemma~\ref{2.2.} that
\begin{align} \label{iIqE2.3}
I(v)=\frac{1}{2}\|\nabla v\|_{2}^{2}-\frac{2|\lambda_{3}|}{p}\|v\|_{p}^{p} \geq g_c(\|\nabla v\|_{2}),~~~~\forall u\in S_c,
\end{align}
where $g_c(t):=\frac{1}{2}t^{2}-\frac{2|\lambda_{3}|\mathcal{C}_{p}^p c^{p(1-\delta_p)} }{p}t^{p\delta_p}$. Notice that $g_c(t)<0$ if $t\in(0,\bar{t}_c)$ and $g_c(t)>0$ if $t\in(\bar{t}_c,+\infty)$, where $\bar{t}_c:=\Big[\frac{4|\lambda_{3}|\mathcal{C}_{p}^p c^{p(1-\delta_p)} }{p}\Big]^{\frac{1}{2-p\delta_p}}$. If $I(v_c)=m_0(c)<0$, it follows immediately that $\|\nabla v_c\|_{2}<\bar{t}_c$.
Let $v_c$ be given by Lemma \ref{LeMa3.4.1}, then Lemma \ref{lem3.1} indicates that
$$
\|v_c\|_{2}=c,~~~~~~~~\|\nabla v_c\|_{2}=
\Big[ 2\delta_p \mathcal{C}_{p}^p|\lambda_{3}| \Big]
^{\frac{1}{2-p\delta_p}}c^{\frac{6-p}{10-3p}} <\bar{t}_c<R_0
$$
since $p\delta_p<2$. That is, $v_c\in V^c_{R_0}$ and hence we have
$$m(c,R_0)=\inf _{V^c_{R_0}} E \leq E(v_c)<I(v_c)=m_0(c)=-\kappa_{p,\lambda_{3}} c^{\frac{6-p}{2-p\delta_p}}. $$
\end{proof}

\section{Proof of Theorems \ref{th1.1}-\ref{th1.3}}
In this Section, we prove our main results.
For any $ k \in [R_{0}(c), R_{1}(c)]$, the problem $m(c,{t}_{c_*})=\inf _{V^c_{{t}_{c_*}}} E$ can be relaxed to
\begin{equation} \label{eAqn3.1}
m(c,k)=\inf _{u\in V^c_{k}} E(u),~~~~~~~~ \mbox{where}~~~~V^c_{k}:=\left\{ u \in S_c,||\nabla u||_{2}<k\right\}.
\end{equation}
(see Lemma \ref{lem3.1} (i)). Indeed, if $||\nabla u||_{2}\!\in\! [R_{0}, R_{1}]$, then $E(u) \!\geq\! h_c(||\nabla u||_{2})\!\geq\!0\!>\!\inf _{V^c_{{t}_{c_*}}} E$. 

\textbf{Proof of Theorem \ref{th1.1}} From Lemma \ref{LeMa3.4.1}, we see that
$m(c,{t}_{c_*})$ is attained by some $u_c \!\in\! V^c_{ {t}_{c_*}} \!:=\!\left\{u \!\in\! S_c :||\nabla u||_{2}\!<\! {t}_{c_*}\right\}$. Combined with \eqref{eAqn3.1}, we have $||\nabla u_c||_{2}\!<\!R_{0}$ and
$$ \inf _{u\in V^c_{R_{0}}} E(u)\!=\!m(c,R_{0})\!=\!m(c,  {t}_{c_*})\!=\!E(u_c). $$


Next, we prove that $u_c$ is a ground state and any ground state of $E|_{S_{c}}$ is a local minimizer of $E$ in $V^c_{R_0}$. By Lemma 2.2 in \cite{as}, we see that any critical point of
$E|_{S_{c}}$ lies in
 \begin{equation}  \label{eq2.4}
\mathcal{P}_{c}=\left\{v \in S_{c} : P(v):=2{||\nabla v||}_2^2+3B(v)+4\lambda_3\delta_{p}{||v||}_p^p=0\right\}.
\end{equation}
Therefore, $P(u_c)\!=\!0$ and $\mathcal{P}_{c}$ contains all the ground states of
$E_{\mu}|_{S_{c}}$. Let $v \!\in \!S_c$ be fixed, we have $v_s(x)\!:=\!s^{\frac{3}{2}} v\left(s x\right)\!\in\! S_c$ for any $s\!>\!0$. Consider the fiber maps
\begin{equation} \label{eq1.13}
\Psi_{v}(s) :=E(v_s)=\frac{s^2}{2} {||\nabla v||}_2^2+\frac{s^3}{2}B(v)-\frac{2|\lambda_{3}|s^{p\delta_{p}}}{p}{||v||}_p^p,
~~~~~~~~\forall s\!>\!0,
\end{equation}
we have
\begin{equation} \label{equa2.9}
\Psi_{v}^{\prime}(s)=\frac{d \Psi_{v}(s) }{ds}=s{||\nabla v||}_2^2+\frac{3s^2}{2}B(v)-2|\lambda_{3}|\delta_{p}s^{p\delta_{p}-1}{||v||}_p^p
=\frac{P(v_s)}{2s}.
\end{equation}
Consequently, $\Psi_{v}^{\prime}(s)=0$ reads
$$\varphi_c(s)=2|\lambda_{3}|\delta_{p}{||v||}_p^p,~~~~\mbox{with}~~~~ \varphi_c(s)=s^{2-p\delta_{p}}{||\nabla v||}_2^2+\frac{3s^{3-p\delta_{p}}}{2}B(v). $$
We observe that $\varphi_c\nearrow$ on $(0,\bar{s})$, $\searrow$ on $(\bar{s},+\infty)$, where $\bar{s}=\frac{2(2-p\delta_{p}){||\nabla v||}_2^2}{3(3-p\delta_{p})[-B(v)]}$. As $c\!\leq\!c_*\!<\!c^*$ (see \eqref{c^*}), the Gagliardo-Nirenberg inequality \eqref{equ2.2} leads to
\begin{align*}
\varphi_c(\bar{s})&=\frac{\bar{s}^{2-p\delta_{p}} }{3-p\delta_{p}}{||\nabla v||}_2^2 \geq \frac{1}{(3-p\delta_{p})}\Big[    \frac{2(2-p\delta_{p})}{3(3-p\delta_{p})\Lambda \mathcal{C}_{4}^4 } \Big]^{2-p\delta_{p}} \frac{{||\nabla v||}_2^{p\delta_{p}}}{c^{2-p\delta_{p}}} \\
&>2|\lambda_{3}|\delta_{p}\mathcal{C}_{p}^p \left\|\nabla u\right\|_{2}^{p\delta_p} c^{p(1-\delta_p)}
\geq 2|\lambda_{3}|\delta_{p}{||v||}_p^p.
\end{align*}  
Since $\varphi_c(0^{+})\!=\!0^{+}$, $\varphi_c(+\infty)\!=\!-\infty$, we see that $\Psi_{v}$ has exactly two critical points. From (\ref{eq2.3}), we have $\Psi_{v}(s)\!=\!E(v_s) \!\geq\! h_c\left(||\nabla v_s||_{2}\right)\!=\!h_c\left(s||\nabla v||_{2}\right)$. So, we get
$$\Psi_{v}(s)\!=\!E(v_s) \!\geq\! h_c\left(s||\nabla v||_{2}\right)\!>\!0,~~~~~~\forall s\!\in\!\big( \frac{R_{0}}{||\nabla v||_{2}},  \frac{R_{1}}{||\nabla v||_{2}}\big)$$
if $c\!<\!c_*$ and $\Psi_{v}(\frac{ {t}_{c_*}}{||\nabla v||_{2}})\!\geq\! h_{c_*}({t}_{c_*})\!=\!0$ if $c\!=\!c_*$, and clearly $\Psi_{v}(0^{+})=0^{-}$, $\Psi_{v}(+\infty)=-\infty$. It follows that $\Psi_{v}$ has a local minimum point $s_v$ on $(0, \frac{R_{0}}{||\nabla v||_{2}})$ at negative level and a local maximum point $t_v$  at nonnegative level such that $s_v\!<\!t_v$. 

Let $w$ be a ground state of $E|_{S_{c}}$, then $P(w)\!=\!0$ and $E(w)\!\leq\! E(u_c)\!<\!0$. Furthermore, there exist a $v\!\in\!S_c$ and a $s_w\!=\!\frac{||\nabla w||_{2}}{||\nabla v||_{2}} \!\in\!(0,+\infty)$ such that $w(x)\!=\!v_{s_w}(x)\!=\!s_w^{\frac{3}{2}} v({s_w} x)\!\in \!S_c$. By using \eqref{eq1.13}-\eqref{equa2.9} with the previous selected $v\!\in\!S_c$, we have
$$\Psi_{v}(s_w)=E(v_{s_w})=E(w)<0,~~~~~~~~\Psi_{v}'(s_w)=P(v_{s_w})
=P(w)=0.$$
It must be that $s_w\!=\!s_{v}\!<\!\frac{R_{0}}{||\nabla v||_{2}}$. Hence, we have $||\nabla w||_{2}\!<\!R_{0}$ and $w\!=\!v_{s_{v}}\!\in\! V^c_{R_0}$. Also, we see that $u_c$ is a ground state of $E|_{S_{c}}$ since $E(w)\!\leq\! E(u_c)\!=\!\inf _{u\in V^c_{R_{0}}} E(u)\!\leq\!E(w)$.


Next, the Lagrange multipliers rule implies the existence of some $\mu_c \in \mathbb{R}$ such that
\begin{align} \label{eq4.2.1}
\frac{1}{2}\int_{\mathbb{R}^{3}}  \nabla u_c \cdot \nabla {\varphi}&+{\lambda _1}  \int_{\mathbb{R}^{3}}{|u_c|^2}u_c {\varphi}+{\lambda _2} \int_{\mathbb{R}^{3}}(K\star{|u_c|^2})u_c{\varphi} \nonumber \\
&+\lambda_3\int_{\mathbb{R}^{3}} \left|u_c\right|^{p-2} u_c {\varphi}+\mu_c\int_{\mathbb{R}^{3}}u_c {\varphi}=0
\end{align}
for each $\varphi \in H^{1}(\mathbb{R}^{3})$. That is, $u_c$ satisfies
$$
- \frac{1}{2} \Delta u +{\lambda _1}{| u |^2}u + {\lambda _2}(K \star {| u |^2})u+{\lambda _3}{| u |^{p-2}}u+\mu_c u=0.
$$
By using Lemma \ref{LeMA4.4.2}, we can prove that
$$   \kappa_{p,\lambda_{3}}c^{\frac{2(p-2)}{2-p\delta_p}}\!<\!\mu_{c}  \!<\!\frac{1-\delta_p}{2\delta_p}
\Big[\frac{4(3-p\delta_p)|\lambda_{3}|\mathcal{C}_{p}^p }{p}\Big]^{\frac{2}{2-p\delta_p}} c^{\frac{2(p-2)}{2-p\delta_p}}.$$
In fact, taking $\varphi\!=\!u_c$ in \eqref{eq4.2.1}, we have $\mu_{c}\!>\!\kappa_{p,\lambda_{3}}c^{\frac{2(p-2)}{2-p\delta_p}}$ as
\begin{equation*}
\mu_{c}c^{2}\!=\!-\Big[\frac{1}{2}{||\nabla u_c||}_2^2\!+\!B(u_c)\!+\!\lambda_3{||u_c||}_p^p\Big]\!=\!-E(u_c)\!-\!\frac{1}{2}B(u_c)
\!+\!\frac{(p-2)|\lambda_3|}{p}{||u_c||}_p^p\!>\!\kappa_{p,\lambda_{3}} c^{\frac{6-p}{2-p\delta_p}}.
\end{equation*}
Since $P(u_c)\!=\!0$, we get
\begin{equation*}
E\left(u_{c}\right)\!=\!\frac{1}{6}||\nabla u_{c}||_{2}^{2}-\frac{2(3-p\delta_{p})|\lambda_{3}|} {3p}  ||u_{c}||_{p}^{p}\!=\!\left(\frac{1}{2}\!-\!\frac{1}{p\delta_{p} }\right)  ||\nabla u_{c}||_{2}^{2}+\frac{3-p\delta_{p}}{2p\delta_{p} } |B(u_c)|\!<\!-\kappa_{p,\lambda_{3}}c^{\frac{6-p}{2-p\delta_p}}.
\end{equation*}
It follows immediately that
\begin{equation} \label{gradientboud}
\frac{2p\delta_p}{2-p\delta_p} \kappa_{p,\lambda_{3}} c^{\frac{6-p}{2-p\delta_p}}<||\nabla u_{c}||_{2}^2<\Big[\frac{4(3-p\delta_p)|\lambda_{3}|\mathcal{C}_{p}^p }{p}\Big]^{\frac{2}{2-p\delta_p}} c^{\frac{6-p}{2-p\delta_p}}.
\end{equation}
Hence we obtain
\begin{align*}
\mu_c c^2&=\frac{1-\delta_p}{2\delta_p}||\nabla u_{c}||_{2}^{2}\!-\!\frac{4-p}{2(p-2)}|B(u_c)|<\frac{1-\delta_p}{2\delta_p}
\Big[\frac{4(3-p\delta_p)|\lambda_{3}|\mathcal{C}_{p}^p }{p}\Big]^{\frac{2}{2-p\delta_p}} c^{\frac{6-p}{2-p\delta_p}}.
\end{align*}
Since $|| \nabla  |u_c| ||_2  \leq  \|\nabla u_c\|_2$ (see \cite{ELMA}), we can assume that $u_c\geq0$ and the strong maximum principle implies that $u_c>0$. Thus, Theorem \ref{th1.1} \textbf{(1)} \textbf{(2)} are true. From Lemma \ref{lem3.1} , we know that $R_{0}(c) \rightarrow 0$ as $\lambda_3 \rightarrow 0^{-}$, and hence $||\nabla u_{c}||_{2}<R_{0}(c) \rightarrow 0$ as well. Moreover
\begin{align*}
0>m(c,R_{0}) \!\geq\! \frac{1}{2}\|\nabla u_{c}\|_{2}^{2}-\frac{\Lambda \mathcal{C}_{4}^4 c }{2} \left\|\nabla u_{c}\right\|_{2}^{3} -\frac{2|\lambda_{3}|\mathcal{C}_{p}^p c^{p(1-\delta_p)} }{p}  \left\|\nabla u_{c}\right\|_{2}^{p\delta_p} \rightarrow 0,
\end{align*}
which implies that $m(c,R_{0})\to 0^-$ as $\lambda_3 \rightarrow 0^{-}$. Hence, \textbf{(3)} of Theorem \ref{th1.1} follows.


We now prove the stability of $Z_{c}$ under the flow corresponding to problem \eqref{gpt1.4}. Suppose that there exists $\varepsilon>0$, a sequence of initial data $\{\psi_{n,0} \} \subset H^1(\R^3)$ and a sequence $\{t_n\}\subset(0,T_{\psi_n}^{max})$ such that the maximal solution $\psi_n(t,x)$ with $\psi_n(0,x)=\psi_{n,0}(x)$ satisfies
\begin{align} \label{al5.9}
\lim _{n \rightarrow \infty} \inf_{v \in Z_{c}} ||\psi_{n,0}-v||_{H^1(\R^3)}=0,~~~~\mbox{and}~~~~ \inf_{v \in Z_{c}} ||\psi_{n}(t_n,\cdot)-v||_{H^1(\R^3)}\geq \varepsilon.
\end{align}
Since $c\!\mapsto \! m(c,  {t}_{c_*})$ is continuous, we have 
$$||\psi_{n,0}||_2=:c_n\to c,~~~~~~~~E(\psi_{n,0}) \to m(c,  {t}_{c_*})=m(c,R_0)\!<\!-\kappa_{p,\lambda_{3}}c^{\frac{6-p}{2-p\delta_p}}.$$
For each $n$ sufficiently large, we have $E(\psi_{n,0})<0$. Therefore,
we deduce from Lemma \ref{lem3.1} (i) (ii) that $||\nabla \psi_{n,0}||_2<R_0(c_n)\leq {t}_{c_*}$.

Let us consider now the solution $\psi_n(t,\cdot)\in S_{c_n}$. If there exists $t_0\in(0, T_{\psi_n}^{max})$ such that $||\nabla \psi_n(t_0,\cdot)||_2= {t}_{c_*}$, then $E(\psi_n(t_0, \cdot))\geq
h_{c_n}( {t}_{c_*})\geq0$, which is impossible since $m(c, {t}_{c_*})\!<\!-\kappa_{p,\lambda_{3}}c^{\frac{6-p}{2-p\delta_p}}$. This shows that $||\nabla \psi_{n}(t,\cdot)||_2<{t}_{c_*}$ for every $t\in[0,T_{\psi_n}^{max})$. Moreover, by conservation of mass and of energy
$$||\psi_n(t_n, \cdot)||_2\to c,~~~~~~~~E(\psi_{n}(t_n, \cdot)) \to m(c, {t}_{c_*}).$$
It follows that $\{\psi_n(t_n,\cdot)\}$ is relatively compact up to translations in $H^{1}(\R^3)$, and hence it converges, up to a translation, to a ground state in $Z_{c}$, in contradiction with (\ref{al5.9}). \qed


\vspace{0.5cm}

\noindent \textbf{Proof of Theorem \ref{th1.3}:}

The first part of our proof was motivated by Theorem 1.3 in \cite{Nbal} which studied a Biharmonic equation. Let $c_k \!\to\! 0^+$ as $k\!\to\!+\infty$ and $u_{c_k}\!\in\! V^{c_k}_{R_0}$ be a positive minimizer of $m(c_k,R_0)$ for each $k\!\in\! \mathbb{N}$, where $V^{c_k}_{R_0}\!:=\!\left\{u \in S_{c_k} : {||\nabla u||}_2\!<\!R_0(c_k) \right\}$. 
Similar to \eqref{gradientboud}, we get
$$\frac{2p\delta_p\kappa_{p,\lambda_{3}}}{2-p\delta_p} <\frac{\left\|\nabla u_{c_k}\right\|_{2}^2}{c_k^{\frac{6-p}{2-p\delta_p}}}
<\Big[\frac{4(3-p\delta_p)|\lambda_{3}|\mathcal{C}_{p}^p }{p}\Big]^{\frac{2}{2-p\delta_p}}.$$
Then, Lemma \ref{2.2.} and the Gagliardo-Nirenberg inequality \eqref{equ2.2} lead  to
\begin{align} \label{aLn5.7}
\frac{ |B(u_{c_k})| }{c_k^{\frac{6-p}{2-p\delta_p}}} &\leq \frac{\Lambda \left\| u_{c_k} \right\|_{4}^{4}}{c_k^{\frac{6-p}{2-p\delta_p}}}\leq
\Lambda \mathcal{C}_{4}^4\Big[ \frac{\left\|\nabla u_{c_k} \right\|_{2}^2}{c_k^{\frac{6-p}{2-p\delta_p}}} \Big]^{\frac{3}{2}}c_k^{\frac{4(4-p)}{10-3p}} \nonumber \\
&\leq\Lambda\mathcal{C}_{4}^4\Big[\frac{4(3-p\delta_p)
|\lambda_{3}|\mathcal{C}_{p}^p }{p}\Big]^{\frac{3}{2-p\delta_p}}c_k^{\frac{4(4-p)}{10-3p}}\to0
\end{align}
as $k\!\to\!+\infty$. We also have $m(c_k,R_0)\!<\!-\kappa_{p,\lambda_{3}} c_k^{\frac{6-p}{2-p\delta_p}}$ by Lemma \ref{LeMA4.4.2}. These facts imply that
\begin{align} \label{aLn5.8}
-\kappa_{p,\lambda_{3}} c_k^{\frac{6-p}{2-p\delta_p}}\!&>\!m(c_k,R_0)=\!E(u_{c_k})\!=\!\frac{1}{2}\|\nabla u_{c_k} \|_{2}^{2}+\frac{1}{2}B(u_{c_k})-\frac{2|\lambda_{3}|}{p}\|u_{c_k}\|_{p}^{p} \nonumber \\
&\geq \! \inf _{u \in S_{c_k}} \Big\{ \frac{1}{2}\|\nabla v \|_{2}^{2}-\frac{2|\lambda_{3}|}{p}\|v\|_{p}^{p} \Big\}\!+\!\frac{1}{2}B(u_{c_k}) \!=\! -\kappa_{p,\lambda_{3}} c_k^{\frac{6-p}{2-p\delta_p}}\!+\!\frac{1}{2}B(u_{c_k}),
\end{align}
where we use Lemma \ref{LeMa4.4.1} in the last equality. From \eqref{aLn5.7}-\eqref{aLn5.8}, we see that
\begin{align} \label{aLn5.9}
\frac{m(c_k,R_0)}{c_k^{\frac{6-p}{2-p\delta_p}}}  \to  -\kappa_{p,\lambda_{3}},~~~~~~~~~~~~~~~~~~ \frac{1}{2}\frac{\left\|\nabla u_{c_k} \right\|_{2}^2}{c_k^{\frac{6-p}{2-p\delta_p}}} -\frac{2|\lambda_{3}|}{p}\frac{\left\| u_{c_k} \right\|_{p}^p}{c_k^{\frac{6-p}{2-p\delta_p}}}\to-\kappa_{p,\lambda_{3}}.
\end{align}
As $P(u_{c_k})=0$, we get
$0\!=\!\frac{P(u_{c_k})}{c_k^{\frac{6-p}{2-p\delta_p}}}\!=\!2\frac{{||\nabla u_{c_k} ||}_2^2}{c_k^{\frac{6-p}{2-p\delta_p}}}\!-\!3\frac{B(u_{c_k})}
{c_k^{\frac{6-p}{2-p\delta_p}}}\!-\!4\delta_{p}|\lambda_3|
\frac{{||u_{c_k}||}_p^p}{c_k^{\frac{6-p}{2-p\delta_p}}}$ and hence we get
\begin{align} \label{aln5.10}
 2\frac{{||\nabla u_{c_k} ||}_2^2}{c_k^{\frac{6-p}{2-p\delta_p}}}\!-\!4\delta_{p}|\lambda_3|
\frac{{||u_{c_k}||}_p^p}{c_k^{\frac{6-p}{2-p\delta_p}}}\to0.
\end{align}
Consequently, \eqref{aLn5.9}-\eqref{aln5.10} indicate that
$$
\frac{\left\|\nabla u_{c_k} \right\|_{2}^2}{c_k^{\frac{6-p}{2-p\delta_p}}} \to
\frac{2p\delta_p}{2-p\delta_p} \kappa_{p,\lambda_{3}},~~~~~~~~
\frac{\left\|u_{c_k}\right\|_{p}^p}{c_k^{\frac{6-p}{2-p\delta_p}}}\to \frac{p}{(2-p\delta_p)|\lambda_{3}|}\kappa_{p,\lambda_{3}}.
$$
Furthermore, we have
$$\frac{\mu_{c_k}}{c_k^{\frac{2(p-2)}{2-p\delta_p}}}=\frac{-1}{c_k^{\frac{6-p}{2-p\delta_p}}}\Big[\frac{1}{2}{||\nabla u_{c_k}||}_2^2+B(u_{c_k})+\lambda_3{||u_{c_k}||}_p^p\Big]\to \frac{p(1-\delta_p)}{2-p\delta_p}\kappa_{p,\lambda_{3}}.$$

Next, we give a precise description of $u_{{c_k}}$ as $k\!\to\!+\infty$. Denote
$$ a_k:=\Big[  \frac{2\beta_{c_k}}{p(1-\delta_p)|\lambda_{3}|}\Big]^{\frac{1}{p-2}},
~~~~~~~~b_k:=\sqrt{\frac{2\delta_p\beta_{c_k}}{1-\delta_p}},~~~~~~\beta_{c_k}:=(1-\delta_p)\Big[2\delta_p  \Big]^{\frac{p\delta_p}{2-p\delta_p}}
\Big[\mathcal{C}_{p}^{p}|\lambda_{3}|  \Big]^{\frac{2}{2-p\delta_p}}
c_k^{\frac{2(p-2)}{2-p\delta_p}}$$
and define $v_k(x)\!:=\!a_k^{-1}u_{{c_k}}(b_k^{-1}x)$, we then compute that
\begin{align} \label{NewbOUDS1}
\left\|\nabla v_{k}\right\|_{2}^2=\frac{b_k}{a_k^2}\left\|\nabla u_{c_k}\right\|_{2}^2=p^{\frac{2}{p-2}}\Big[2\mathcal{C}_{p}^{p} \Big]^{\frac{-(6-p)}{(2-p\delta_p)(p-2)}}   \Big[\delta_p|\lambda_{3}|  \Big]^{\frac{-2}{2-p\delta_p}} \frac{\left\|\nabla u_{c_k}\right\|_{2}^2}{c_k^{\frac{6-p}{2-p\delta_p}}},
\end{align}
\begin{align} \label{NewbOUDS2}
\left\| v_{k}\right\|_{2}^2=\frac{b_k^3}{a_k^2}\left\| u_{c_k}\right\|_{2}^2=\Big[\frac{p}{2\mathcal{C}_{p}^{p}} \Big]^{\frac{2}{p-2}},
\end{align}
\begin{align} \label{NewbOUDS3}
\left\| v_{k}\right\|_{p}^p=\frac{b_k^3}{a_k^p}\left\| u_{c_k}\right\|_{p}^p=\Big[\frac{p}{2}\Big]^{\frac{p}{p-2}}
\Big[\mathcal{C}_{p}^{p} \Big]^{\frac{-(6-p)}{(2-p\delta_p)(p-2)}}   \Big[2\delta_p|\lambda_{3}|  \Big]^{\frac{-p\delta_p}{2-p\delta_p}} \frac{\left\|u_{c_k}\right\|_{p}^p}{c_k^{\frac{6-p}{2-p\delta_p}}} .
\end{align}
As a result of \eqref{NewbOUDS1}-\eqref{NewbOUDS2}, $\{v_k\}$ is bounded in $H^1(\R^3)$.
If $\mathop {\lim }\limits_{k  \to \infty} \sup _{y \in \mathbb{R}^{3}} \int_{B_{R}(y)}\left|v_{k}(x)\right|^{2}dx\!=\!0$ for any $R\!>\!0$, we can prove that $\left\|v_{k}\right\|_{r} \to 0$ since $2<r<6$, which contradicts with \eqref{NewbOUDS3}. Then, there exist an $\varepsilon_0>0$ and a sequence $\{y_{k}\} \subset \mathbb{R}^{3}$ such that
$$
\int_{B_{R}(y_k)}\left|v_{k}(x)\right|^{2} d x \geq \varepsilon_0>0
$$
for some $R>0$. Hence, up to a subsequence, we have $$\tilde{v}_{k}(x):=v_{k}(x+y_k)\rightharpoonup v \not\equiv0~~~~~~~~\mbox{in}~~~~~~~~H^1(\R^3)$$
for some $v \in H^1(\R^3)$ with $v\geq0$. Since $u_{c_k}$ satisfies
$$
- \frac{1}{2} \Delta u_{c_k} +{\lambda _1}{| u_{c_k} |^2}u_{c_k} + {\lambda _2}(K \star {| u_{c_k} |^2})u_{c_k}+{\lambda _3}{| u_{c_k} |^{p-2}}u_{c_k}+\mu_{c_k} u_{c_k}=0,
$$
we see that $\tilde{v}_{k}$ solves
\begin{align} \label{equaV}
- \frac{1}{2} \Delta \tilde{v}_{k} +\frac{a_k^2}{b_k^2}\Big({\lambda _1}{| \tilde{v}_{k} |^2}\tilde{v}_{k} + {\lambda _2}(K \star {| \tilde{v}_{k} |^2})\tilde{v}_{k}\Big)+{\lambda _3}\frac{a_k^{p-2}}{b_k^2}{| \tilde{v}_{k} |^{p-2}}\tilde{v}_{k}+\frac{\mu_{c_k}}{b_k^2} \tilde{v}_{k}=0.
\end{align}
By the previous asymptotic properties, we also deduce that
\begin{align} \label{NewbOUDS4}
&\frac{a_k^2}{b_k^2}=\Big[\frac{2}{p}\Big]^{\frac{2}{p-2}}
\Big[\mathcal{C}_{p}^{p} \Big]^{\frac{-4}{2-p\delta_p}}   \Big[2\delta_p|\lambda_{3}|\Big]^{\frac{1}{2-p\delta_p}} c_k^{\frac{2(4-p)}{2-p\delta_p}} \to 0, \nonumber \\
& \frac{a_k^{p-2}}{b_k^2}=\frac{1}{p\delta_p|\lambda_{3}|},
~~~~~~~~\frac{\mu_{c_k}}{b_k^2}=\Big[2\delta_p|\lambda_{3}|\mathcal{C}_{p}^{p}  \Big]^{\frac{-2}{2-p\delta_p}} \frac{\mu_{c_k}}{c_k^{\frac{2(p-2)}{2-p\delta_p}}} \to \frac{1-\delta_p}{2\delta_p}.
\end{align}
Therefore, $v$ solves
\begin{align} \label{Equav}
- \frac{1}{2} \Delta v-\frac{1}{p\delta_p}{|v|^{p-2}}v+\frac{1-\delta_p}{2\delta_p}v=0.
\end{align}
Since $v\geq0$ and $v \not\equiv0$, the strong maximum principle implies that $v>0$. Similar to the proof of Theorem 2.4 in \cite{RpoL}, we have $v(x)\to0$ and $|x| \to+\infty$. Therefore, up to a translation, $v$ is radially symmetric about $0\in\R^3$ (see \cite{LiMi}). From \cite{Kwon,Wein}, we know that the equation
$$ -\Delta W\!+\!(\frac{1}{\delta_p}-\!1)W \!=\!\frac{2}{p\delta_p}|W|^{p-2}W, $$
has a unique positive radial solution $W_p$, so it must be $v=W_p$ (up to a translation). Observing that
$$
\left\| {v}\right\|_{2}^2 \leq \mathop {\lim }\limits_{k  \to \infty}\left\| \tilde{v}_{k}\right\|_{2}^2=\mathop {\lim }\limits_{k  \to \infty}\left\| {v}_{k}\right\|_{2}^2=\Big[\frac{p}{2\mathcal{C}_{p}^{p}} \Big]^{\frac{2}{p-2}}=\left\|W_p\right\|_{2}^2=\left\| {v}\right\|_{2}^2,
$$
we have $\tilde{v}_{k}(x)\rightarrow v$ in $L^2(\R^3)$. By the Gagliardo-Nirenberg inequality \eqref{equ2.2}, $\tilde{v}_{k}(x)\rightarrow W_p$ in $L^r(\R^3)$ for $r\in(2,6)$. As a result of \eqref{equaV} and \eqref{Equav}, we have $\tilde{v}_{k}(x)\rightarrow W_p $ in $H^1(\R^3)$. Moreover, as the limit function $W_p$ is independent of the subsequence that we choose, we see that the convergence is true for the whole sequence.  For simplicity, denote  $\gamma_{c_k}\!:=\!\frac{\beta_{c_k}}{1-\delta_p}$ and then we have $\tilde{v}_k(x)\!:=\!\Big[  \frac{p|\lambda_{3}|}{2\gamma_{c_k}}
\Big]^{\frac{1}{p-2}}u_{{c_k}}(\frac{x+y_k}{\sqrt{2\delta_p\gamma_{c_k}}})\!
\rightarrow\! W_p $ in $H^1(\R^3)$.  \qed \\

\vspace{.55cm}


\begin{thebibliography}{99999}


\bibitem{AlGi}
A. Alvino, G. Trombetti, J. I. Diaz, P. L. Lions, Elliptic equations and Steiner Symmetrization, Commun. Partial. Diff. Equ. 49 (1996) 217-236.

\bibitem{as}
P. Antonelli, C. Sparber, Existence of solitary waves in dipolar quantum gases, Physica D 240 (2011) 426-431.


\bibitem{a7}
H. Br\'{e}zis, E. Lieb, A relation between pointwise convergence of functions and convergence of functionals, Proc. Amer. Math. Soc 88 (1983) 486-490.

\bibitem{Nbal}
N. Boussaid, A. J. Fernandez, L. Jeanjean, Some remarks on a minimization problem associated to a fourth order nonlinear Scrh\"{o}dinger equation, Preprint, arXiv:1910.13177.


\bibitem{bcd}
H. Bahouri, J-Y. Chemin, R. Danchin, Fourier Analysis and Nonlinear Partial Differential Equations, Springer, Heidelberg, 2011.

\bibitem{TblO}
T. Bartsch, L. Jeanjean, N. Soave, Normalized solutions for a system of
coupled cubic Schr\"odinger equations on $\R^3$, J. Math. Pures Appl. 106 (2016) 583-614.

\bibitem{lAkI}
P. B. Blakie, Properties of a dipolar condensate with three-body interactions, Phys. Rev. A 93 (2016) 033644.

\bibitem{bcw}
W. Bao, Y. Cai, H. Wang, Efficient numerical method for computing ground states and dynamic of dipolar Bose-Einstein condensates, J. Comput. Phys. 229 (2010) 7874-7892.

%





\bibitem{bj1}
J. Bellazzini, L. Jeanjean, On dipolar quantum gases in the unstable regime, SIAM J. Math. Anal. 48 (2017) 2028-2058.

\bibitem{ch}
R. Carles, H. Hajaiej, Complementary study of the standing waves solutions of the Gross-Pitaevskii equation in dipolar quantum gases, Bull. London Math. Soc. 47 (2015) 509-518.

\bibitem{cms}
R. Carles, P. Markowich, C. Sparber, On The Gross-Pitaevskii equation for trapped dipolar quantum gases, Nonlinearity 21 (2008) 2569-2590.


\bibitem{RpoL}
R. Cipolatti, On the existence of standing waves for a Davey-Stewartson system. Commun. Partial Diff. Equ. 17 (1992) 967-988.


\bibitem{GyWy}
Y. J. Guo, Y. Luo, W. Yang, The Nonexistence of Vortices for Rotating Bose-Einstein Condensates with Attractive Interactions, Arch. Rational Mech. Anal. 238 (2020) 1231-1281.

\bibitem{GLwJ}
Y. J. Guo, C. S. Lin, J. C. Wei, Local uniqueness and refined spike profiles of ground states for two-dimensional attractive Bose-Einstein condensates, SIAM J. Math. Anal. 49 (2017) 3671-3715.


\bibitem{YhXl}
Y. He, X. Luo, Concentrating standing waves for the Gross-Pitaevskii equation in trapped dipolar quantum gases, J. Differential Equations, 266 (2019) 600-629.

\bibitem{eHtR}
L. Jeanjean, J. Jendrej, T. T. Le, N. Visciglia, Orbital stability of ground states for a Sobolev critical schr\"odinger equation, arXiv:2008.12084v1.

\bibitem{Kwon}
M. K. Kwong, Uniqueness of positive solutions of $\Delta u -u + u^p=0$ in $\mathbb{R}^n$,  Arch. Rational Mech. Anal. 105 (1989) 243-266.

\bibitem{ELMA}
E. H. Lieb, M. Loss, Analysis, 2nd ed., Graduate Series in Mathematics 14, Amer. Math. Soc. Providence (2001).

\bibitem{LiMi}
Y. Li, W.-M. Ni, Radial symmetry of positive solutions of nonlinear
elliptic equations in $\R^n$,  Commun. Partial Diff. Equ. 18 (1993) 1043-1054.



\bibitem{LilP}
P. L. Lions, The concentration-compactness principle in the calculus of variations. The locally compact case. II. Ann. Inst. H.
Poincar\'{e} Anal. Non Lin\'{e}aire. 1 (1984) 223-283.


\bibitem{Ymas}
Y. M. Luo, A. Stylianou, Ground states for a nonlocal cubic-quartic
Gross-Pitaevskii equation, arXiv:1806.00697v3.

\bibitem{lYmAs}
Y. M. Luo, A. Stylianou, On 3d dipolar Bose-Einstein condensates involving quantum fluctuations and three-body interactions, Discrete Continuous Dynamical Systems-B, doi: 10.3934/dcdsb.2020239.

\bibitem{mBlo}
B. A. Malomed, Suppression of quantum-mechanical collapse in bosonic gases with intrinsic repulsion: A brief review, Condensed Matter 3 (2018) 15.


\bibitem{Maed}
M. Maeda, On the symmetry of the ground states of nonlinear Schr\"odinger
equation with potential, Adv. Nonlinear Stud. 10 (2010) 895-925.

\bibitem{MeLf}
J. Metz, T. Lahaye, B. Fr\"ohlich, A. Griesmaier, T. Pfau, H. Saito, Y. Kawaguchi, and M. Ueda. Coherent collapses of dipolar bose-einstein condensates for different trap geometries, New Journal of Physics, 11 (2009) 055032.









\bibitem{rSaU}
W. Strauss, Existence of Solitary Waves in Higher Dimensions, Comm. Math. Phys. 55 (1977) 149-162.

\bibitem{sszl}
L. Santos, G. Shlyapnikov, P. Zoller, M. Lewenstein, Bose-Einstein condensation in trapped dipolar gases, Phys. Rev. Lett. 85 (2000) 1791-1797.

\bibitem{NSoa}
N. Soave, Normalized ground states for the NLS equation with combined nonlinearities, J. Differential Equations 269 (2020) 6941-6987.

\bibitem{TrIa}
A. Triay, Existence of minimizers in generalized Gross-Pitaevskii
theory with the Lee-Huang-Yang correction, arXiv: 1904.10672v1.




\bibitem{Wein}
M. Weinstein, Nonlinear Schr\"odinger equations and sharp interpolation estimates, Commun. Math. Phys. 87 (1983) 567-576.


\bibitem{yy1}
S. Yi, L. You, Trapped atomic condensates with anisotropic interactions, Phys. Rev. 61 (2000) 041604.

\bibitem{yy2}
S. Yi, L. You, Trapped condensates of atoms with dipole interactions, Phys. Rev. 63 (2001) 053607.











%



%
%
%
%




























%
%
%



%


%
%
%
%
%
%
%
%
%





%






\end{thebibliography}
\end{document}